\numberwithin{equation}{section}
\newtheorem{Theorem}{Theorem}[section]
\newtheorem{Proposition}[Theorem]{Proposition}
\newtheorem{Lemma}[Theorem]{Lemma}
\theoremstyle{definition}
\newtheorem{Example}[Theorem]{Example}
\newcommand{\db}{\overline\partial}
\newcommand{\ov}{\overline}
\newcommand{\wi}{\widetilde}
\newcommand{\FS}{{\rm FS}}
\DeclareMathOperator{\ric}{Ric}
\DeclareMathOperator{\supp}{supp}
\DeclareMathOperator{\ord}{ord}
\DeclareMathOperator{\di}{div}
\newcommand{\cali}[1]{\mathscr{#1}}
\newcommand{\cO}{\cali{O}}
\newcommand{\cC}{\cali{C}}
\newcommand{\field}[1]{\mathbb{#1}}
\newcommand{\Z}{\field{Z}}
\newcommand{\R}{\field{R}}
\newcommand{\C}{\field{C}}
\newcommand{\N}{\field{N}}
\newcommand{\D}{\field{D}}
\newcommand{\E}{\field{E}}
\newcommand{\comment}[1]{}
\begin{document}

\title{Equidistribution for weakly holomorphic sections of line bundles on algebraic curves}

\author{Dan Coman}
\thanks{D.\ Coman is partially supported by the NSF Grant DMS-1700011}
\address{Department of Mathematics, 
Syracuse University, Syracuse, NY 13244-1150, USA}
\email{dcoman@syr.edu}

\author{George Marinescu}
\address{Univerisit\"at zu K\"oln, Mathematisches institut,
Weyertal 86-90, 50931 K\"oln, Germany 
\newline\mbox{\quad}\,Institute of Mathematics `Simion Stoilow', 
Romanian Academy, Bucharest, Romania}
\email{gmarines@math.uni-koeln.de}
\thanks{G.\ M.\ partially supported by 
the DFG funded project SPP 2265
(Project-ID 422743078)}

\subjclass[2010]{Primary 32L10; Secondary 14H60, 30F10, 32U40.}
\keywords{Bergman kernel, Fubini-Study current, 
singular Hermitian metric, algebraic curve, weakly holomorphic sections}

\date{December 28, 2020}
\dedicatory{Dedicated to Professor Ahmed Zeriahi on the occasion of his retirement}

\begin{abstract}
We prove the convergence of the normalized Fubini-Study measures and the logarithms of the Bergman kernels of various Bergman spaces of holomorphic and weakly holomorphic sections associated to a singular Hermitian holomorphic line bundle on an algebraic curve. Using this, we study the asymptotic distribution of the zeros of random sequences of sections in these spaces.
\end{abstract}

\maketitle

\tableofcontents

\section{Introduction}

Let $(X,\omega)$ be a K\"ahler manifold of dimension $n$ and $(L,h)$ be a positive holomorphic line bundle on $X$ such that $\omega=c_1(L,h)$. We let $h_p$ be the metric induced by $h$ on $L^p:=L^{\otimes p}$ and denote by $H^0(X,L^p)$ the space of holomorphic sections of $L^p$. One can define a sequence of Fubini-Study forms $\gamma_p$ on $X$ by setting $\gamma_p=\Phi^\star(\omega_\FS)$, where $\Phi:X\to\mathbb P\big(H^0(X,L^p)^\star\big)$ is the Kodaira map associated to $(L^p,h_p)$ and $\omega_\FS$ is the Fubini-Study form on a projective space. A theorem of Tian \cite{Ti90} states that $\frac{1}{p}\,\gamma_p\to c_1(L,h)$ as $p\to+\infty$, in the $\cC^2$ topology on $X$. In \cite{CM15} we proved the analogue of Tian's theorem in the case when $h$ is a singular metric on $L$ and its curvature is a K\"ahler current, i.e.\ $c_1(L,h)\geq\varepsilon\omega$ for some $\varepsilon>0$. The above convergence is now in the weak sense of currents on $X$. 

In \cite{CMM17} we generalized this further and studied 
the asymptotic behavior of the sequence of Fubini-Study currents 
associated to an arbitrary sequence of singular Hermitian 
holomorphic line bundles $L_p$ on a compact normal K\"ahler space $X$. 
The normality of $X$ was essential, in order to apply 
Riemann's second extension theorem for holomorphic 
functions \cite[page 143]{GR84} and for plurisubharmonic functions 
\cite[Satz 4]{GR56} on a normal complex space. 
An interesting question is to analyze the general case when $X$ 
is a compact K\"ahler space not necessarily normal.

In the present paper we study the one dimensional case.
Note that any compact one dimensional complex space
is projective and thus algebraic by \cite[Satz\,2,\,p.\,343]{Gra:62} 
(see also \cite[Theorem 6.2]{RS15}).
We consider the following setting:

\smallskip

(A) $X\subset{\mathbb P}^N$ is an irreducible algebraic curve, 
$\Sigma=\{x_1,\ldots,x_m\}\subset X$ is the set of singular points 
of $X$, and $\omega$ is a Hermitian form on $X$. 

\smallskip

(B) $L$ is a holomorphic line bundle on $X$ 
with singular Hermitian metric  $h$ whose local weights 
are weakly subharmonic and such that 
$c_1(L,h)\geq \varepsilon\omega$ on $X\setminus\Sigma$ 
for some $\varepsilon>0$.

\smallskip

Let us now introduce the normalization of $X$, 
which will be needed throughout the paper:

\smallskip

(C) $\sigma:\wi X\to{\mathbb P}^N$, where $\wi X$ is a 
compact Riemann surface, is the normalization of $X$ and 
$\wi\omega$ is a Hermitian form on $\wi X$.

\smallskip

Here $c_1(L,h)$ denotes the curvature measure of $h$ 
(see Section \ref{S:BFS}). A natural choice of the form $\omega$ 
is the restriction to $X$ of the Fubini-Study form on ${\mathbb P}^N$. 
We denote by $H^0_w(X,L^p)$, respectively by $H^0_c(X,L^p)$, 
the space of weakly holomorphic sections, 
respectively continuous weakly holomorphic sections of $L^p$. 
Then 
\[H^0(X,L^p)\subset H^0_c(X,L^p)\subset 
H^0_w(X,L^p)\subset H^0(X\setminus\Sigma,L^p),\]
where the latter is the space of holomorphic sections of 
$L^p\,\vert_{X\setminus\Sigma}$. 
We refer to Section \ref{S:prelim} and Section \ref{S:BFS} 
for the definitions of these notions.

We consider the corresponding Bergman subspaces of 
$L^2$-holomorphic sections with respect to the natural inner product 
induced by the metric $h_p$ and $\omega$:
\begin{align}
H^0_{c,(2)}(X,L^p)&=
\big\{S\in H^0_c(X,L^p):\,\|S\|_p<+\infty\big\}, \label{e:Bsc}\\
H^0_{w,(2)}(X,L^p)&=
\big\{S\in H^0_w(X,L^p):\,\|S\|_p<+\infty\big\},  \label{e:Bsw}\\
H^0_{(2)}(X\setminus\Sigma,L^p)&=
\big\{S\in H^0(X\setminus\Sigma,L^p):\,\|S\|_p<+\infty\big\}, 
\label{e:Bsreg}
\end{align}
where 
\[\|S\|^2_p=\int_{X\setminus\Sigma}|S|^2_{h_p}\,\omega.\]
We show in Proposition \ref{P:Bsreg} that the spaces 
$H^0_{(2)}(X\setminus\Sigma,L^p)$ are finite dimensional.

Let $P_{c,p},P_{w,p},P_p$ be the Bergman kernel functions 
and $\gamma_{c,p},\gamma_{w,p},\gamma_p$ 
be the Fubini-Study measures of the spaces 
$H^0_{c,(2)}(X,L^p)$, $H^0_{w,(2)}(X,L^p)$, 
$H^0_{(2)}(X\setminus\Sigma,L^p)$, respectively, 
induced by the above metric data. 
We refer to Section \ref{S:BFS} for their definition and properties. 
In particular, we have that $\log P_{c,p},\log P_{w,p},\log P_p\in L^1(X,\omega)$ 
(see \eqref{e:BFS2}, \eqref{e:BFS5}), and $\gamma_p$ 
are signed measures with ``small" negative variation 
(see Lemma \ref{L:FSneg}). Our main result is the following:

\begin{Theorem}\label{T:mt}
Let $X,\omega,L,h$ verify assumptions (A) and (B). Then, as $p\to+\infty$, we have: 

(i) $\displaystyle\frac{1}{p}\,\log P_{c,p}\to0,\,
\frac{1}{p}\,\log P_{w,p}\to0,\,\frac{1}{p}\,
\log P_p\to0$ in $L^1(X,\omega)$.

(ii) $\displaystyle\frac{1}{p}\,\gamma_{c,p}\to c_1(L,h),\,
\frac{1}{p}\,\gamma_{w,p}\to c_1(L,h),\,\frac{1}{p}\,
\gamma_p\to c_1(L,h),\,\frac{1}{p}\,\gamma_p^+\to c_1(L,h)$ 
in the weak sense of measures on $X$, where $\gamma_p^+$ 
is the positive variation of $\gamma_p$.
\end{Theorem}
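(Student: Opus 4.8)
The plan is to transport the whole problem to the normalization $\sigma\colon\wi X\to X$, a smooth compact Riemann surface, where the Tian-type theorem for singular metrics on smooth curves is available. Set $\wi L=\sigma^\star L$ and $\wi h=\sigma^\star h$; the weakly subharmonic local weights pull back to subharmonic weights, so $c_1(\wi L,\wi h)=\sigma^\star c_1(L,h)$ is a positive measure on $\wi X$ of total mass $\deg L>0$, and $\sigma^\star\omega=f\,\wi\omega$ with $f\geq0$ smooth, vanishing exactly on the finite set $\wi\Sigma=\sigma^{-1}(\Sigma)$. The algebraic heart of the reduction is that pullback identifies weakly holomorphic sections of $L^p$ on $X$ with holomorphic sections of $\wi L^p$ on $\wi X$, carrying $\|\cdot\|_p$ to the norm $S\mapsto\big(\int_{\wi X}|S|^2_{\wi h_p}\,\sigma^\star\omega\big)^{1/2}$. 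Letting $V_p$ denote any one of the three transported Bergman spaces, one checks that
\[
H^0\big(\wi X,\wi L^p\otimes\cO(-\wi\Sigma)\big)\subseteq V_p\subseteq H^0\big(\wi X,\wi L^p\otimes\cO(D)\big)
\]
for a single effective divisor $D$ supported on $\wi\Sigma$, whose order bounds the poles permitted by the vanishing of $\sigma^\star\omega$. The left-hand inclusion lands inside the transported $H^0_{c,(2)}$ because a section vanishing on all of $\wi\Sigma$ descends to one that is continuous across $\Sigma$. Since the two bracketing spaces differ in dimension by the $p$-independent constant $\deg D+\deg\wi\Sigma$, this simultaneously explains the finite-dimensionality asserted in Proposition \ref{P:Bsreg}.

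Next I would run the smooth theory on $\wi X$. The bracketing bundles are $\wi L^p$ twisted by a fixed line bundle, so $\tfrac1p$ times their curvature still tends to the K\"ahler current $c_1(\wi L,\wi h)$, and \cite{CM15} gives $\tfrac1p\log\wi P^{\pm}_p\to0$ in $L^1(\wi X,\wi\omega)$ for the Bergman kernels $\wi P^{\pm}_p$ of the two bracketing spaces taken with respect to $\wi\omega$. Off the finite set $\wi\Sigma$ the weight $f$ is bounded above and below, so this convergence persists for the $\sigma^\star\omega$-norm; granting the local control near $\wi\Sigma$ discussed below, the monotonicity of the Bergman kernel in the space pinches the three intermediate kernels between $\wi P^-_p$ and $\wi P^+_p$ and gives $\tfrac1p\log\wi P_{c,p},\tfrac1p\log\wi P_{w,p},\tfrac1p\log\wi P_p\to0$. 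Transporting back through $\sigma$ then produces $\tfrac1p\log P_{c,p},\tfrac1p\log P_{w,p},\tfrac1p\log P_p\to0$ in $L^1(X,\omega)$, which is part (i).

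For part (ii) I would work again on $\wi X$ and then push forward. On the smooth curve the standard identity relating a Fubini--Study current to its Bergman kernel (compare Section \ref{S:BFS}) reads $\wi\gamma_{\bullet,p}=p\,c_1(\wi L,\wi h)+\tfrac12\,dd^{c}\log\wi P_{\bullet,p}$, so the $L^1$-convergence just obtained forces $\tfrac{1}{2p}\,dd^{c}\log\wi P_{\bullet,p}\to0$ weakly and therefore $\tfrac1p\wi\gamma_{\bullet,p}\to c_1(\wi L,\wi h)$. The Kodaira maps of $V_p$ and of the corresponding space on $X$ are intertwined by $\sigma$, which gives $\gamma_{\bullet,p}=\sigma_\star\wi\gamma_{\bullet,p}$; applying the weakly continuous pushforward of the birational map $\sigma$, and using $\sigma_\star\sigma^\star=\mathrm{id}$, yields $\tfrac1p\gamma_{\bullet,p}\to\sigma_\star c_1(\wi L,\wi h)=c_1(L,h)$. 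Finally, writing $\gamma_p=\gamma_p^+-\gamma_p^-$ and invoking Lemma \ref{L:FSneg} to see that the total mass of $\tfrac1p\gamma_p^-$ tends to $0$, one obtains $\tfrac1p\gamma_p^+=\tfrac1p\gamma_p+\tfrac1p\gamma_p^-\to c_1(L,h)$ as well.

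The step I expect to be the genuine obstacle is the behavior at the finitely many singular points, where the reduction to the smooth theory is not clean. The form $\sigma^\star\omega=f\,\wi\omega$ degenerates on $\wi\Sigma$, so the transported $L^2$-norm is not uniformly comparable to the $\wi\omega$-norm there, and the upper bound on the Bergman kernels near $\wi\Sigma$---equivalently, the absence of spurious mass of the limiting measures on the non-normal locus $\Sigma$, which is precisely what the normality hypothesis in \cite{CMM17} removed---must be established by hand. I would control it by a local sub-mean-value estimate for the subharmonic functions $|S|^2_{\wi h_p}$ on small discs about the points of $\wi\Sigma$, combined with the observation that such discs carry arbitrarily small $\sigma^\star\omega$-measure as their radius shrinks, so that any residual growth of $\tfrac1p\log P_{\bullet,p}$ near $\wi\Sigma$ is annihilated in $L^1(X,\omega)$ and contributes no atom to the weak limits.
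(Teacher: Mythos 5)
There is a genuine gap, and it sits exactly at the point where you wave the smooth theory into action. Your reduction asserts that $c_1(\wi L,\wi h)=c_1(\sigma^\star L,\sigma^\star h)$ is a K\"ahler current on $\wi X$, so that \cite{CM15} applies to the bracketing spaces. But hypothesis (B) only gives $c_1(L,h)\geq\varepsilon\omega$ on $X\setminus\Sigma$, and pulling back yields $c_1(\wi L,\wi h)\geq\varepsilon\,\sigma^\star\omega$, where by \eqref{e:ram2} the form $\sigma^\star\omega$ vanishes to order $2\alpha(y)$ at each $y\in\sigma^{-1}(\Sigma)$. Thus $c_1(\wi L,\wi h)\geq\varepsilon f\,\wi\omega$ with $f$ vanishing on $\sigma^{-1}(\Sigma)$, and there is in general \emph{no} bound $c_1(\wi L,\wi h)\geq\varepsilon'\wi\omega$: the paper's second example (smooth $h$) makes this explicit, since there $c_1(\sigma^\star L,\sigma^\star h)$ actually vanishes to finite order at the points of $\sigma^{-1}(\Sigma)$, which is why that example invokes the semi-positive asymptotics of \cite{MS19} rather than \cite{CM15}. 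Twisting by a fixed bundle $\cO(D)$ or $\cO(-\wi\Sigma)$ cannot repair this, because the positivity deficit near $\sigma^{-1}(\Sigma)$ scales like $p$ while the twist contributes $O(1)$; nor does the sequence version \cite{CMM17} apply, since its hypotheses require curvature lower bounds that blow up uniformly. This degeneracy of positivity at the non-normal points is precisely the difficulty the theorem is about, so your proposal assumes away the core of the problem.

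Two secondary points. First, your left-hand inclusion $H^0\big(\wi X,\wi L^p\otimes\cO(-\wi\Sigma)\big)\subseteq V_p$ need not hold: the weights of $h$ are only weakly subharmonic, so $e^{-2p\varphi_\alpha}$ can be non-integrable at points where $\varphi_\alpha=-\infty$, and a holomorphic section vanishing on $\wi\Sigma$ need not have finite $\|\cdot\|_p$-norm; hence the ``pinching'' between two spaces with known asymptotics is not available. Second, your treatment near $\Sigma$ addresses only the upper bound (no spurious mass), and even there $|S|^2_{\wi h_p}$ is not subharmonic (the weight spoils subharmonicity), while a ``small discs have small $\sigma^\star\omega$-measure'' argument cannot kill the $L^1$-mass of $\frac1p\log P_{\bullet,p}$ without a uniform local upper bound. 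What the paper actually does is: (a) produce peak sections lying in $H^0_{c,(2)}(X,L^p)$ by solving $\db$ with Demailly's $L^2$-estimates on the Stein curve $X\setminus\Sigma$, where the missing positivity near $\Sigma$ is restored by the auxiliary form $\Omega=\sigma_\star\wi\omega$ (Lemma \ref{L:Om}) and the potential $\phi$ with $A\omega+dd^c\phi\geq b\Omega$ (Lemma \ref{L:phi}), and where the weight $e^{-2\eta}$ built from $\rho_j=(\alpha(y_\ell)+1)\log|\sigma_\ell^{-1}|$ forces the $\db$-solution to vanish at $\Sigma$ (via Lemma \ref{L:Ls}), so the corrected section is continuous weakly holomorphic; this gives hypothesis (ii) of Lemma \ref{L:Bkf} and the convergence \eqref{e:creg} on $X\setminus\Sigma$; and (b) transfer the convergence across the punctures by noting that $u_p\circ\sigma_\ell+\alpha(y_\ell)\log|\zeta|$ is subharmonic on $\D$, using the sub-mean-value estimate of Lemma \ref{L:sh} and H\"ormander's compactness theorem for subharmonic functions to upgrade $L^1_{loc}(\D\setminus\{0\})$ to $L^1_{loc}(\D)$ convergence. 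Your outline contains no substitute for step (a), and only a heuristic version of step (b).
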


In a series of papers starting with \cite{ShZ99}, 
Shiffman and Zelditch describe the asymptotic distribution of zeros 
of random sequences of holomorphic sections of powers of a 
positive line bundle $L$ on a projective manifold 
(see also \cite{ShZ08}, \cite{Sh08}, \cite{DMS}). 
We showed in \cite{CM15} that some of their results 
can be generalized to the setting of line bundles $L$ 
with singular Hermitian metrics on compact K\"ahler manifolds, 
and also on compact K\"ahler orbifolds \cite{CM13}. 
Further such equidistribution results with estimates on 
the speed of convergence are obtained in 
\cite{CMN16}, \cite{CMN18}, \cite{DMM}.

In \cite{CMM17} and \cite{BCM20} we prove equidistribution results for zeros of random sequences of sections in the case when the base space $X$ is a compact normal K\"ahler space and the sequence of powers $L^p$ is replaced by an arbitrary sequence of singular Hermitian holomorphic line bundles $L_p$ satisfying certain assumptions. The results in \cite{BCM20} apply to very general probability measures on the Bergman spaces $H^0_{(2)}(X,L_p)$ of $L^2$-integrable holomorphic sections, and one of the key ingredients of the proof is the version of Theorem \ref{T:mt} in that setting (see \cite[Theorem 1.1]{CMM17} and \cite[Theorem 1.1]{BCM20}). 

We conclude the paper by noting that  \cite[Theorem 1.1]{BCM20} holds in the present setting without any changes and with the same proof. Let $\{H^p\}_{p\geq1}$ be any of the sequences of Bergman spaces defined in \eqref{e:Bsc}, \eqref{e:Bsw}, or \eqref{e:Bsreg}. Given a section $S\in H^p$, we associate to $S$ the measure $[\di(S)]$ defined in \eqref{e:div1}. Geometrically, $[\di(S)]$ is the sum of Dirac masses with multiplicities at the zeros of $S$ in $X\setminus\Sigma$ plus a sum of Dirac masses at the singular points $x_j\in\Sigma$ with coefficients given in terms of the order of $\sigma^\star S$ at the points of $\sigma^{-1}(x_j)$ (see Proposition \ref{P:Bsreg} and \eqref{e:div2}). 

We set $n_p=\dim H^p$ and let $S^p_1,\ldots,S^p_{n_p}$ be an orthonormal basis of $H^p$. Using this basis we identify $H^p$ to $\C^{n_p}$ and endow it with a probability measure $\mu_p$ which satisfies the moment condition (B) from \cite{BCM20}. Then the conclusions of \cite[Theorem 1.1]{BCM20} hold with $A_p=p$ in the setting of Theorem \ref{T:mt} for the probability spaces $(H^p,\mu_p)$. Let us state here more precisely one particular case of this theorem.

We let $\mu_p$ be the normalized area measure 
on the unit sphere of $H^p\equiv\C^{n_p}$ 
(see \cite[(4.13)]{BCM20}) and consider 
the product probability space 
$(\mathcal{H},\mu)=\left(\prod_{p=1}^\infty 
H^p,\prod_{p=1}^\infty\mu_p\right)$. 
The expectation measure $\E[\di(s_p)]$ 
of the measure-valued random variable 
$H^p\ni s_p\mapsto[\di(s_p)]$ is defined by
\[\big\langle \E[\di(s_p)],\chi\big\rangle=
\int_{H^p}\Big(\int_X\chi\,d[\di(s_p)]\Big)\,d\mu_p(s_p),\]
where $\chi$ is a continuous function on $X$. We have:

\begin{Theorem}\label{T:zero}
Let $X,\omega,L,h$ verify assumptions (A) and (B). Then the following hold:

(i) The measure $\E[\di(s_p)]$ is well defined and $\displaystyle\frac{1}{p}\,\E[\di(s_p)]\to c_1(L,h)$\,, as $p\to+\infty$, in the weak sense of measures on $X$. 

(ii) For $\mu$-a.\,e.\ sequence $\{s_p\}\in\mathcal{H}$ we have $\displaystyle\frac{1}{p}\,\log|s_p|_{h_p}\to0$ in $L^1(X,\omega)$ and $\frac{1}{p}\,[\di(s_p)]\to c_1(L,h)$, $\frac{1}{p}\,[\di(s_p)]^+\to c_1(L,h)$, in the weak sense of measures on $X$, as $p\to+\infty$, where $[\di(s_p)]^+$ is the positive variation of $[\di(s_p)]$.
\end{Theorem}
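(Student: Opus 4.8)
The plan is to deduce Theorem~\ref{T:zero} from Theorem~\ref{T:mt} by invoking the abstract equidistribution machinery of \cite[Theorem~1.1]{BCM20}, after verifying that the present setting satisfies the hypotheses of that theorem with $A_p=p$. First I would check that each Bergman space $H^p$ (any of \eqref{e:Bsc}, \eqref{e:Bsw}, \eqref{e:Bsreg}) is finite dimensional (Proposition~\ref{P:Bsreg}) so that $H^p\equiv\C^{n_p}$ is legitimate and the normalized area measure $\mu_p$ on the unit sphere is well defined; this $\mu_p$ automatically satisfies the moment condition (B) of \cite{BCM20}. The key point is that \cite[Theorem~1.1]{BCM20} takes as input precisely the two conclusions of Theorem~\ref{T:mt}: the $L^1$-convergence $\frac1p\log P_p\to0$ and the weak convergence $\frac1p\gamma_p\to c_1(L,h)$ (together with the analogous statements for the positive variation). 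Thus the entire analytic content has already been established, and what remains is the probabilistic bookkeeping.

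For part (i), I would write the expectation measure explicitly in terms of the Bergman kernel. Using an orthonormal basis $S^p_1,\ldots,S^p_{n_p}$ and the identity $|s_p|^2_{h_p}=\sum_j|c_j|^2|S^p_j|^2_{h_p}$ for $s_p=\sum_j c_jS^p_j$, one integrates the Lelong--Poincar\'e formula $[\di(s_p)]=\frac{1}{\pi}dd^c\log|s_p|_{h_p}+p\,c_1(L,h)$ against a test function $\chi$ and interchanges the integral over $X$ with the integral over the sphere $\mu_p$. The rotational invariance of $\mu_p$ yields, after integrating the logarithm of $|s_p|_{h_p}$ over the sphere, a constant plus $\frac{1}{2}\log P_p$, and hence
\[
\E[\di(s_p)]=\frac{1}{2\pi}\,dd^c\log P_p+p\,c_1(L,h)
\]
in the weak sense (the constant is annihilated by $dd^c$). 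Dividing by $p$ and applying part (i) of Theorem~\ref{T:mt}, the term $\frac{1}{2\pi p}dd^c\log P_p\to0$ as a current because $\frac1p\log P_p\to0$ in $L^1(X,\omega)$ and $dd^c$ is continuous on $L^1_{loc}$; this gives $\frac1p\E[\di(s_p)]\to c_1(L,h)$.

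For part (ii), the almost-sure statement, I would follow the standard variance-plus-Borel--Cantelli scheme of \cite{BCM20}, \cite{ShZ99}. One shows the $L^1(X,\omega)$-convergence $\frac1p\log|s_p|_{h_p}\to0$ for $\mu$-a.e.\ sequence by estimating, using the moment condition (B), the tail probability that $\big\|\frac1p\log|s_p|_{h_p}\big\|_{L^1}$ exceeds a fixed $\delta>0$; summability of these probabilities over $p$ and the Borel--Cantelli lemma then give almost-sure convergence on the product space $(\mathcal{H},\mu)$. Applying $dd^c$ and the Lelong--Poincar\'e formula converts this into the weak convergence $\frac1p[\di(s_p)]\to c_1(L,h)$, and the statement for the positive variation $[\di(s_p)]^+$ follows from the corresponding control in Theorem~\ref{T:mt}(ii) on $\frac1p\gamma_p^+$. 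I expect the main obstacle to be purely expository rather than mathematical: one must confirm that every quantitative estimate in \cite{BCM20} was proved using only the normality-independent ingredients, so that it carries over verbatim to a possibly non-normal curve $X$. Here the crucial structural fact is that pulling back to the normalization $\sigma:\wi X\to X$ (assumption (C)) reduces all local computations to a smooth Riemann surface, where the singular points contribute only finitely many additional Dirac masses as recorded in \eqref{e:div2}; these finite corrections do not affect any of the asymptotic estimates, which is why the proof of \cite[Theorem~1.1]{BCM20} applies without change.
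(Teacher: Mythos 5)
Your overall strategy coincides with the paper's: both proofs reduce Theorem \ref{T:zero} to \cite[Theorem 1.1]{BCM20} (and \cite[Theorem 4.12]{BCM20}), using Theorem \ref{T:mt} together with \eqref{e:BFS3}, \eqref{e:BFS5} and the Lelong--Poincar\'e formula \eqref{e:LP} as the analytic input, with $A_p=p$; your explicit computation of $\E[\di(s_p)]$ via rotational invariance and your Borel--Cantelli scheme are just the internals of that citation spelled out.

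There is, however, a genuine gap exactly where the paper has to do extra work beyond the citation, namely the case $H^p=H^0_{(2)}(X\setminus\Sigma,L^p)$, where $[\di(s_p)]$ is only a \emph{signed} measure. First, part (i) asserts that $\E[\di(s_p)]$ is a well-defined \emph{measure}; your Fubini/rotational-invariance computation only produces it as a distribution, and to justify the interchange of integrals and to get finite total variation one needs the uniform bound \eqref{e:div3}, $[\di(s_p)]^-\leq\sigma_\star([R_\sigma])$, which gives $|[\di(s_p)]|\leq[\di(s_p)]+2\sigma_\star([R_\sigma])$ and hence $\big|\E[\di(s_p)]\big|(X)\leq\big(pc_1(L,h)+2\sigma_\star([R_\sigma])\big)(X)$; you never invoke this. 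Second, for signed measures, convergence in the sense of distributions (which is what the Lelong--Poincar\'e argument yields) does not automatically upgrade to weak convergence of measures; again \eqref{e:div3} is what makes $\frac{1}{p}[\di(s_p)]+\frac{1}{p}\sigma_\star([R_\sigma])$ a positive measure and closes this step. Third, your justification of $\frac{1}{p}[\di(s_p)]^+\to c_1(L,h)$ by appeal to ``the corresponding control on $\frac{1}{p}\gamma_p^+$'' is not correct as stated: $\gamma_p$ is the Fubini--Study measure of the whole Bergman space, not the zero-divisor of an individual random section, and no bound on $\gamma_p^+$ controls $[\di(s_p)]^-$. The correct argument is that $[\di(s_p)]^-\leq\sigma_\star([R_\sigma])$ is a fixed finite measure independent of $p$ and $s_p$, so $\frac{1}{p}[\di(s_p)]^-\to0$ and therefore $\frac{1}{p}[\di(s_p)]^+=\frac{1}{p}[\di(s_p)]+\frac{1}{p}[\di(s_p)]^-\to c_1(L,h)$. (A minor additional point: your Lelong--Poincar\'e formula carries a spurious factor $\frac{1}{\pi}$ relative to the paper's normalization $dd^c=\frac{i}{\pi}\partial\db$ in \eqref{e:LP}.)
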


The paper is organized as follows. In Section \ref{S:prelim} 
we recall the notion of weakly subharmonic function on a complex 
curve and the definition of its Laplacian. In Section \ref{S:BFS} 
we consider holomorphic line bundles endowed with singular metrics 
on an algebraic curve. We discuss the measures associated to divisors 
of holomorphic or weakly holomorphic sections and 
we define the Bergman kernel functions and Fubini-Study measures 
of the corresponding Bergman spaces. Theorems \ref{T:mt} and \ref{T:zero} 
are proved in Section \ref{S:proofs}. In Section \ref{S:ex} we give examples 
of algebraic curves in ${\mathbb P}^2$ for which we describe explicitly 
the Bergman spaces of sections considered in the paper. 
We also give a precise lower estimate
of the Bergman kernel $P_{w,p}$
in the case of a smooth Hermitian metric on $L$.

\section{Preliminaries}\label{S:prelim}

In this section we review the notions of (weakly) 
holomorphic and (weakly) subharmonic function on a complex curve. 

Throughout the paper we denote by $\D_r\subset\C$ 
the open disc of radius $r>0$ centered at $0$, by $\D:=\D_1$ 
the unit disc, and by $\lambda$ the Lesbesgue measure on $\C$. 
We denote by $\ord(g,\zeta)$ the order of a meromorphic function 
$g$ at $\zeta\in\C$. 

Let $X$ be a complex curve, i.e.\ a reduced complex space 
of dimension one, and let $\omega$ be a Hermitian form on $X$ 
(see e.g.\ \cite[Section 2.1]{CMM17} for the definition). 
Let $\Sigma$ be the set of singular points of $X$. 

Working locally near a singular point $x_j\in\Sigma$ and 
using a local embedding of $X$ into $\C^N$ for some $N\geq2$, 
we may assume that $X$ is a complex curve in a polydisc 
$D_{x_j}\subset\C^N$ centered at $x_j$, and is the union 
of finitely many irreducible complex curves which intersect only at $x_j$. 
Moreover each such irreducible component $Y$ of $X$ at $x_j$ 
has a local normalization (see \cite[Theorem 5.7]{Gri89}, 
\cite[Section 6.1]{Ch89}): 
\begin{equation}\label{e:ln}
\begin{split}
& f=(f_1,\ldots,f_N):\D\to D_{x_j}  \text{ holomorphic with $f(\D)=Y$, 
$f(0)=x_j$, and} \\
& \text{$f:\D\setminus\{0\}\to Y\setminus\{x_j\}$ is biholomorphic.}
\end{split}
\end{equation}
We denote by 
\begin{equation}\label{e:ram1}
\alpha=\alpha(x_j,Y)=\min\{\ord(f'_\ell,0):\,1\leq\ell\leq N\}
\end{equation}
the ramification index of $f$ at $0$ (see e.g.\ \cite[p.\ 264]{GH94}). 

By shrinking the polydisc $D_{x_j}$, we may assume that 
$\omega$ is the restriction to $X$ of a Hermitian form on $D_{x_j}$. Hence 
\begin{equation}\label{e:beta}
C_1^{-1}\beta_N\,\vert_{_X}\leq\omega\leq C_1\beta_N\,\vert_{_X}
\end{equation}
for some constant $C_1>1$, where 
$\beta_N=\frac{i}{2}\,\sum_{\ell=1}^Ndz_\ell\wedge d\ov z_\ell$ 
is the standard K\"ahler form on $\C^N$. By \eqref{e:beta} 
we have $C_1^{-1}f^\star\beta_N\leq f^\star\omega\leq C_1f^\star\beta_N$. 
Hence we may assume that 
\begin{equation}\label{e:ram2}
C_2^{-1}|\zeta|^{2\alpha}\,\frac{i}{2}\,d\zeta\wedge d\ov\zeta\leq 
f^\star\omega\leq C_2|\zeta|^{2\alpha}\,\frac{i}{2}\,d\zeta\wedge d\ov\zeta\leq 
C_2\,\frac{i}{2}\,d\zeta\wedge d\ov\zeta
\end{equation}
holds on $\D$, with some constant $C_2>1$. 

If $U\subset X$ is an open set, a weakly holomorphic function 
on $U$ is a holomorphic function on $U\setminus\Sigma$ 
which is locally bounded on $U$. A weakly holomorphic function 
on $U$ that extends continuously at the points of $\Sigma\cap U$ 
is called a continuous weakly holomorphic function on $U$. 
Note that such a function is not necessarily holomorphic on $U$ 
(see e.g.\ \cite[p.\ 91]{Gu2}). Let 
$$\cO_X(U)\subset\cO_{X,c}(U)\subset\cO_{X,w}(U)$$ 
denote the set of holomorphic, continuous weakly holomorphic, 
respectively weakly holomorphic functions on $U$. 
We remark that if $X$ is locally irreducible at any point
then the sheaves $\cO_{X,c}$ and $\cO_{X,w}$
coincide.

A subharmonic function on $U$ is a function which 
(using local embeddings $X\hookrightarrow\C^N$) 
is locally the restriction to $X$ of a plurisubharmonic function 
in the ambient space $\C^N$ (see e.g.\ \cite[Section 2.1]{CMM17}). 
A weakly subharmonic function on $U$ is a subharmonic function 
on $U\setminus\Sigma$ which is locally upper bounded on $U$. 
Let $SH(U)\subset WSH(U)$ be the set of subharmonic, 
respectively weakly subharmonic functions on $U$.

\begin{Lemma}\label{L:wsh1}
If $U\subset X$ is open then $WSH(U)\subset L^1_{loc}(U,\omega)$.
\end{Lemma}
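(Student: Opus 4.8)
The plan is to localize the problem and reduce it, via the normalization \eqref{e:ln}, to the classical fact that a subharmonic function on a disc lies in $L^1_{loc}$. Away from the singular set $\Sigma$ the curve is a Riemann surface and $u\in WSH(U)$ is genuinely subharmonic there, so it is locally $\omega$-integrable by the standard theory; moreover $u$ is locally bounded above on all of $U$, so its positive part $u^+$ is locally $\omega$-integrable everywhere (points carry no $\omega$-mass, since $\omega$ is locally the restriction of a smooth form). Hence the only thing left to check is that $u$ stays $\omega$-integrable in a neighbourhood of each point $x_j\in\Sigma$, which amounts to controlling its negative part there.

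Fix such an $x_j$ and work in the polydisc $D_{x_j}$, where $X$ is a finite union of irreducible components $Y$ meeting only at $x_j$. It suffices to bound $\int_{Y}|u|\,\omega$ on a small neighbourhood of $x_j$ in each $Y$. Let $f:\D\to Y$ be the local normalization from \eqref{e:ln}, so that $f:\D\setminus\{0\}\to Y\setminus\{x_j\}$ is biholomorphic and $f(0)=x_j$. Since $Y\setminus\{x_j\}\subset X\setminus\Sigma$ and the restriction of $u$ to the smooth locus is subharmonic in the intrinsic sense, the pullback $v:=u\circ f$ is subharmonic on $\D\setminus\{0\}$. Because $u$ is locally bounded above near $x_j$ and $f$ is continuous with $f(0)=x_j$, the function $v$ is bounded above in a punctured neighbourhood of $0$.

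Now I invoke the removable singularity theorem for subharmonic functions: a function subharmonic on $\D\setminus\{0\}$ that is bounded above near $0$ extends to a subharmonic function on $\D$ (by setting its value at $0$ to be $\limsup_{\zeta\to0}v(\zeta)$). Consequently $v\in L^1_{loc}(\D,\lambda)$. The change of variables induced by the biholomorphism $f|_{\D\setminus\{0\}}$, together with \eqref{e:ram2}, which gives $f^\star\omega\le C_2\,\tfrac{i}{2}\,d\zeta\wedge d\ov\zeta=C_2\,\lambda$, then yields for a small $r\in(0,1)$
\[
\int_{f(\D_r)}|u|\,\omega=\int_{\D_r}|v|\,f^\star\omega\le C_2\int_{\D_r}|v|\,\lambda<+\infty,
\]
where the removed points $0$ and $x_j$ are null for the respective (atomless) measures. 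Summing over the finitely many components $Y$ at $x_j$ and combining with the integrability away from $\Sigma$ gives $u\in L^1_{loc}(U,\omega)$.

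The main obstacle is the passage across the singular point, where one must handle simultaneously the facts that $u$ is only bounded \emph{above} (not bounded) near $x_j$ and that $\omega$ may degenerate there. The removable singularity theorem deals with the former by promoting $v$ to a genuine subharmonic function on the full disc, while the estimate \eqref{e:ram2} controls the latter, so the possible vanishing of $f^\star\omega$ at $0$ only improves the bound. The one point requiring care is the claim that subharmonicity pulls back under $f$, i.e.\ that the restriction of $u$ to $Y\setminus\{x_j\}$ is subharmonic in the intrinsic sense; this follows from the definition of $SH$ as local restrictions of ambient plurisubharmonic functions.
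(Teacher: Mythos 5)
Your proof is correct and follows essentially the same route as the paper: reduce to a neighbourhood of each singular point, pull $u$ back through the local normalization $f:\D\to Y$ to get a subharmonic function on $\D\setminus\{0\}$ that is bounded above near $0$, extend it across $0$ by the removable singularity theorem, and use \eqref{e:ram2} to dominate $f^\star\omega$ by Lebesgue measure. The extra remarks (integrability of $u^+$, atomlessness of the measures, intrinsic subharmonicity on the smooth locus) are fine but not needed beyond what the paper's argument already contains.
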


\begin{proof} 
Let $u\in WSH(U)$. Since $u$ is subharmonic on 
$U\setminus\Sigma$ we have that 
$u\in L^1_{loc}(U\setminus\Sigma,\omega)$. 
So we only need to show that $u$ is integrable on each 
irreducible component $Y$ of $X$ at a point $x_j\in\Sigma\cap U$. 
Let $f:\D\to Y$ be a local normalization of $Y$ and set 
$Y_r=f^{-1}(\D_r)$ for $r<1$. Then $u\circ f$ is subharmonic on 
$\D\setminus\{0\}$ and upper bounded near $0$, 
so it extends to a subharmonic function on $\D$. 
Hence by \eqref{e:ram2},
\[\int_{Y_r\setminus\{x_j\}}|u|\,\omega=
\int_{\D_r\setminus\{0\}}|u\circ f|\,f^\star\omega\leq 
C_2\int_{\D_r}|u\circ f|\,d\lambda<+\infty.\]
This yields the conclusion.
\end{proof}
We refer to \cite{D85} (see also \cite[Section 2.1]{CMM17}) 
for the definition of smooth forms on complex spaces. 
In our context let $\cC_{X,0}^\infty(U)$ denote 
the set of smooth functions on $X$ with compact support in $U$. 
Let $d=\partial+\db$, $d^c=\frac{1}{2\pi i}\,(\partial-\db)$, so 
$dd^c=\frac{i}{\pi}\,\partial\db$. If $u\in WSH(U)$ then $dd^cu$ 
is a positive measure on $U\setminus\Sigma$ given in a local coordinate by 
$\frac{1}{2\pi}\,\Delta u$. Since $u\in L^1_{loc}(U,\omega)$ 
we have that $dd^cu$ is a distribution on $U$ defined by 
\[\langle dd^cu,\chi\rangle=
\int_{U\setminus\Sigma}u\,dd^c\chi\,,\,\;\chi\in\cC_{X,0}^\infty(U).\]

\begin{Lemma}\label{L:wsh2}
If $u\in WSH(U)$ then $dd^cu$ is a positive measure on $U$. 
Moreover, assume that $x_j\in\Sigma\cap U$, $D_{x_j}\subset\C^N$ 
is a polydisc as in \eqref{e:ln}, and $Y_\ell$ are the irreducible components 
of $X$ at $x_j$ with local normalizations $f_\ell:\D\to Y_\ell$, $1\leq\ell\leq k$. 
Then each function $v_\ell:=u\circ f_\ell$ extends 
to a subharmonic function on 
$\D$ and $dd^cu=\sum_{\ell=1}^k(f_\ell)_\star(dd^cv_\ell)$. 
\end{Lemma}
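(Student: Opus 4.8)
The plan is to reduce everything to a local computation around each singular point $x_j\in\Sigma\cap U$, since $\Sigma$ is discrete and on $U\setminus\Sigma$ the function $u$ is subharmonic, so there $dd^cu$ is already the positive measure $\frac{1}{2\pi}\Delta u$. First I would establish the extension claim exactly as in the proof of Lemma \ref{L:wsh1}: each $v_\ell=u\circ f_\ell$ is subharmonic on $\D\setminus\{0\}$ and, because $u$ is locally upper bounded on $U$, is bounded above near $0$; hence by the removable singularity theorem for subharmonic functions it extends to a subharmonic function $v_\ell$ on $\D$. In particular $dd^cv_\ell$ is a well-defined positive measure on $\D$, so each pushforward $(f_\ell)_\star(dd^cv_\ell)$ is a positive measure on $Y_\ell\subset X$. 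Thus once the identity $dd^cu=\sum_{\ell=1}^k(f_\ell)_\star(dd^cv_\ell)$ is proved near $x_j$, the right-hand side is manifestly a positive measure, which together with positivity on $U\setminus\Sigma$ gives that $dd^cu$ is a positive measure on all of $U$.

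For the identity, take a test function $\chi\in\cC_{X,0}^\infty(U)$ supported in a small neighborhood of $x_j$. By definition,
\[\langle dd^cu,\chi\rangle=\int_{U\setminus\Sigma}u\,dd^c\chi=\sum_{\ell=1}^k\int_{Y_\ell\setminus\{x_j\}}u\,dd^c\chi,\]
using that locally $U\setminus\Sigma$ is the disjoint union of the punctured branches $Y_\ell\setminus\{x_j\}$. Since $f_\ell:\D\setminus\{0\}\to Y_\ell\setminus\{x_j\}$ is a biholomorphism, the change of variables formula together with the fact that $dd^c$ commutes with holomorphic pullback gives
\[\int_{Y_\ell\setminus\{x_j\}}u\,dd^c\chi=\int_{\D\setminus\{0\}}v_\ell\,f_\ell^\star(dd^c\chi)=\int_{\D\setminus\{0\}}v_\ell\,dd^c\psi_\ell,\]
where $\psi_\ell:=\chi\circ f_\ell$ is smooth with compact support in $\D$. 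It then remains to verify, for each $\ell$, the single-variable identity
\[\int_{\D\setminus\{0\}}v_\ell\,dd^c\psi_\ell=\int_\D\psi_\ell\,dd^cv_\ell=\big\langle(f_\ell)_\star(dd^cv_\ell),\chi\big\rangle.\]

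The hard point---really the only point requiring care---is this last identity, because the integration by parts must be carried out across the puncture at $0$, where $v_\ell$ may be singular. I would handle it in two steps. First, since $\psi_\ell$ is smooth, $dd^c\psi_\ell=\frac{1}{2\pi}\Delta\psi_\ell\,d\lambda$ is absolutely continuous with respect to Lebesgue measure, so removing the single point $0$ does not change the integral: $\int_{\D\setminus\{0\}}v_\ell\,dd^c\psi_\ell=\int_\D v_\ell\,dd^c\psi_\ell$. Second, since $v_\ell$ is subharmonic on $\D$ it lies in $L^1_{loc}(\D)$ and $dd^cv_\ell$ is its distributional Laplacian, so the standard Green/Riesz identity $\int_\D v_\ell\,dd^c\psi_\ell=\int_\D\psi_\ell\,dd^cv_\ell$ holds for the smooth compactly supported $\psi_\ell$. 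Summing over $\ell$ yields the asserted formula near $x_j$, and hence the positivity of $dd^cu$ on $U$.
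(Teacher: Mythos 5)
Your proposal is correct and follows essentially the same route as the paper's proof: extend each $v_\ell$ across the puncture by the removable singularity theorem for upper-bounded subharmonic functions, then unwind $\langle dd^cu,\chi\rangle$ branch by branch via the biholomorphism $f_\ell$ and integrate by parts. You merely make explicit the step the paper leaves implicit (that deleting the point $0$ costs nothing since $dd^c(\chi\circ f_\ell)$ is absolutely continuous, after which the distributional identity for the subharmonic $v_\ell$ applies), which is a fair and correct elaboration rather than a different argument.
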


\begin{proof} 
Since $v_\ell$ is subharmonic on $\D\setminus\{0\}$ 
and is upper bounded near $0$, it extends to a subharmonic 
function on $\D$. If $\chi$ is a test function supported in $D_{x_j}$ we have 
\begin{align*}
\langle dd^cu,\chi\rangle&=
\sum_{\ell=1}^k\int_{Y_\ell\setminus\{x_j\}}u\,dd^c\chi
=\sum_{\ell=1}^k\int_{\D_\ell\setminus\{0\}}(u\circ f_\ell)\,
dd^c(\chi\circ f_\ell) \\ 
&=\sum_{\ell=1}^k\langle dd^cv_\ell,\chi\circ f_\ell\rangle=
\sum_{\ell=1}^k\langle(f_\ell)_\star(dd^cv_\ell),\chi\rangle.
\end{align*}
This yields the conclusion.
\end{proof}

We conclude this section with the following lemma 
(see also \cite[Theorem 1.7]{D85}):

\begin{Lemma}\label{L:wsh3}
Let $u:U\to[-\infty,+\infty)$ be a function. 
Then $u\in SH(U)$ if and only if $u\in WSH(U)$ and, 
for every $x_j\in\Sigma\cap U$ and every irreducible component 
$Y$ of $X$ at $x_j$, we have that 
$\displaystyle u(x_j)=\limsup_{Y\ni x\to x_j}u(x)$.
\end{Lemma}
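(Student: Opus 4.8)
The plan is to prove the two implications separately, in each case reducing the analysis at a singular point $x_j$ to the branch normalizations $f_\ell:\D\to Y_\ell$ from \eqref{e:ln} and to the elementary fact that a subharmonic function $v$ on $\D$ satisfies $v(0)=\limsup_{\zeta\to0}v(\zeta)$. For the forward implication, assume $u\in SH(U)$. Then $u\in WSH(U)$ is immediate from the definitions, so only the limsup identity requires argument. Working near $x_j$, write $u=\wi u\,\vert_X$ with $\wi u$ plurisubharmonic on an ambient polydisc; then for any irreducible component $Y$ with normalization $f$ the composition $u\circ f=\wi u\circ f$ is subharmonic on $\D$, being the pullback of a plurisubharmonic function by the holomorphic map $f$, and $(u\circ f)(0)=\wi u(x_j)=u(x_j)$. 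Since $f:\D\setminus\{0\}\to Y\setminus\{x_j\}$ is biholomorphic, the subharmonicity of $u\circ f$ yields
\[u(x_j)=(u\circ f)(0)=\limsup_{\zeta\to0}(u\circ f)(\zeta)=\limsup_{Y\ni x\to x_j}u(x),\]
which is the asserted identity.

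For the converse, assume $u\in WSH(U)$ satisfies the limsup identity. Away from $\Sigma$ the function $u$ is subharmonic by hypothesis, so it suffices to treat a neighborhood of each $x_j$. By Lemma \ref{L:wsh2} each $v_\ell=u\circ f_\ell$ extends subharmonically to $\D$, its value at the origin being $\limsup_{\zeta\to0}v_\ell(\zeta)=\limsup_{Y_\ell\ni x\to x_j}u(x)=u(x_j)$ by the hypothesis; hence $u\circ f_\ell$ is genuinely subharmonic on all of $\D$, with the correct value at $0$. I would then record that, since near $x_j$ the set $X$ is the union of the finitely many components $Y_\ell$, one has $\limsup_{X\ni x\to x_j}u(x)=\max_\ell\limsup_{Y_\ell\ni x\to x_j}u(x)=u(x_j)$, so that $u$ is upper semicontinuous on $U$ (it is already so off $\Sigma$, being subharmonic there).

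It then remains to promote these branchwise statements to subharmonicity of $u$ on $X$ in the intrinsic sense. I would do this through the Fornaess--Narasimhan type characterization underlying \cite[Theorem 1.7]{D85}: a locally bounded above, upper semicontinuous function on the reduced curve $X$ is subharmonic, i.e.\ locally the restriction of an ambient plurisubharmonic function, as soon as $u\circ\phi$ is subharmonic (or $\equiv-\infty$) for every holomorphic disc $\phi:\D\to X$. Given such a $\phi$ with $\phi(0)=x_j$, after shrinking $\D$ so that $\phi(\D)\subset D_{x_j}$ and $\phi^{-1}(x_j)=\{0\}$, the connected punctured image $\phi(\D\setminus\{0\})$ lies in a single component $Y_\ell\setminus\{x_j\}$; since $f_\ell$ is a homeomorphism onto $Y_\ell$, the map $g=f_\ell^{-1}\circ\phi$ extends holomorphically across $0$ to $g:\D\to\D$ with $g(0)=0$, whence $\phi=f_\ell\circ g$ and $u\circ\phi=v_\ell\circ g$ is subharmonic. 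Discs meeting $\Sigma$ only in the regular part are handled directly, as subharmonicity of $u\circ\phi$ is a local property on $\D$. The characterization then gives $u\in SH(U)$.

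The main obstacle is precisely this last step of the converse: passing from subharmonicity of the pullbacks $u\circ f_\ell$ on the normalizations to intrinsic subharmonicity of $u$ on the possibly non-normal curve $X$. The crux is the factorization of an arbitrary holomorphic disc through a single branch normalization, together with the matching of the boundary value $v_\ell(0)=u(x_j)$, which is exactly what the limsup hypothesis supplies; without it the extended pullback could take at $0$ a value strictly larger than $u(x_j)$, so that $u$ would fail to be upper semicontinuous and hence could not be subharmonic.
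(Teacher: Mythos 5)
Your proof is correct and follows essentially the same route as the paper's: both implications are handled via the Forn\ae ss--Narasimhan disc criterion, with the limsup hypothesis used to identify the value at the puncture of the subharmonic extension of the pullback of $u$ to a disc. The only real difference is that you factor an arbitrary holomorphic disc $\phi$ through a branch normalization as $\phi=f_\ell\circ g$ and compose with the extended $v_\ell$, whereas the paper applies the removable-singularity extension argument to $u\circ\phi$ directly on the punctured disc, so your factorization step is a harmless but unnecessary detour.
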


\begin{proof} 
One implication is obvious, so we assume that $u\in WSH(U)$. 
Using \cite[Theorem 5.3.1]{FN80} we have to show that $u\circ g$ 
is subharmonic on $\D$, for any non-constant 
holomorphic function $g:\D\to U$. It suffices to assume that 
$g(0)=x_j\in\Sigma\cap U$ and to prove that $u\circ g$ 
is subharmonic on $\D_\varepsilon$ for some $\varepsilon>0$. 
For $\varepsilon>0$ sufficiently small we have that $Y=g(\D_\varepsilon)$ 
is an irreducible component of $X$ at $x_j$ such that $Y\setminus\{x_j\}$ 
is smooth and $g(\D_\varepsilon\setminus\{0\})=Y\setminus\{x_j\}$. 
The function $v=u\circ g$ is subharmonic on $\D_\varepsilon\setminus\{0\}$ and 
\[\limsup_{\zeta\to0}v(\zeta)=\limsup_{Y\ni x\to x_j}u(x)=u(x_j)=v(0).\]
Hence $v$ is subharmonic on $\D_\varepsilon$.
\end{proof}
We note that if $X$ is locally irreducible at any point
then the notions of subharmonic and weakly subharmonic function are the same.

\section{Bergman kernels and Fubini-Study measures}\label{S:BFS} 

We assume in this section that $X$, $\Sigma$, $\omega$, 
$\sigma:\wi X\to{\mathbb P}^N$, $\wi\omega$, verify (A) and (C). 
We introduce and study the Bergman kernels and Fubini-Study measures 
for the various spaces of $L^2$-integrable holomorphic sections 
considered in this paper. 

By the properties of the normalization (see e.g.\ \cite{Gri89}) 
we have that 
$\sigma(\wi X)=X$, 
$$\sigma:\wi X\setminus\sigma^{-1}(\Sigma)\to X\setminus\Sigma$$ 
is biholomorphic, and the number of points in the preimage 
$\sigma^{-1}(x_j)$ of $x_j\in\Sigma$ is equal to the number 
of irreducible components of $X$ at $x_j$. 
Moreover, for each such component $Y$ there exists a unique point 
$y\in\sigma^{-1}(x_j)$ and a coordinate neighborhood of 
$y\equiv0$ which contains $\D$, such that $\sigma\,\vert_\D$ 
is a local normalization of $Y$ as in \eqref{e:ln} with the polydisc 
$D_{x_j}\subset\C^N\hookrightarrow{\mathbb P}^N$. 
We denote by $\alpha(y)=\alpha(x_j,Y)$ the ramification index of 
$\sigma$ at $y$ defined in \eqref{e:ram1}. Let 
\begin{equation}\label{e:ramdiv}
R_\sigma=\sum_{y\in\sigma^{-1}(\Sigma)}\alpha(y)y\,,\,\;
[R_\sigma]=\sum_{y\in\sigma^{-1}(\Sigma)}\alpha(y)\delta_y\,,
\end{equation}
be the ramification divisor of $\sigma$ and its associated measure, 
where $\delta_y$ is the Dirac mass at $y$. Let $\cO_{\wi X}(R_\sigma)$ 
denote the line bundle defined by $R_\sigma$.

We state the following version of Lemma \ref{L:wsh2} 
in the compact setting:

\begin{Lemma}\label{L:wsh4} 
If $u\in WSH(U)$, where $U\subset X$ is open, then 
$v=u\circ\sigma$ extends to a subharmonic function on $\sigma^{-1}(U)$ and $dd^cu=\sigma_\star(dd^cv)$. 
\end{Lemma}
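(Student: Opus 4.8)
The plan is to reduce the compact statement in Lemma \ref{L:wsh4} to the already-established local statement of Lemma \ref{L:wsh2}, exploiting the structure of the normalization $\sigma:\wi X\to X$ described immediately before the lemma. First I would observe that $\sigma$ restricts to a biholomorphism $\wi X\setminus\sigma^{-1}(\Sigma)\to X\setminus\Sigma$, so on $\sigma^{-1}(U)\setminus\sigma^{-1}(\Sigma)$ the pullback $v=u\circ\sigma$ is simply the transport of $u|_{U\setminus\Sigma}$ under a biholomorphism; since $u$ is subharmonic there, $v$ is subharmonic on $\sigma^{-1}(U)\setminus\sigma^{-1}(\Sigma)$. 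The only points requiring attention are those of $\sigma^{-1}(\Sigma)\cap\sigma^{-1}(U)$.

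Next I would fix a point $y\in\sigma^{-1}(x_j)$ with $x_j\in\Sigma\cap U$. By the properties of the normalization recalled above, $y$ corresponds to exactly one irreducible component $Y$ of $X$ at $x_j$, and there is a coordinate neighborhood of $y\equiv 0$ containing $\D$ on which $\sigma|_\D$ is precisely a local normalization $f:\D\to Y$ as in \eqref{e:ln}. Therefore, in this chart, $v=u\circ\sigma=u\circ f$, which is exactly the function $v_\ell$ appearing in Lemma \ref{L:wsh2}. That lemma already asserts that $u\circ f$ extends to a subharmonic function on $\D$, i.e.\ $v$ extends subharmonically across $y$. Doing this for each of the finitely many points of $\sigma^{-1}(\Sigma)\cap\sigma^{-1}(U)$ shows that $v$ extends to a subharmonic function on all of $\sigma^{-1}(U)$.

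For the identity $dd^cu=\sigma_\star(dd^cv)$, I would argue that both sides are measures on $U$ that agree away from $\Sigma$ (there $\sigma$ is a biholomorphism and pushforward of $dd^cv$ is literally $dd^cu$), so it suffices to check they agree near each $x_j\in\Sigma\cap U$. Locally the irreducible components $Y_1,\ldots,Y_k$ of $X$ at $x_j$ correspond to the distinct points $y_1,\ldots,y_k\in\sigma^{-1}(x_j)$ with local normalizations $f_\ell=\sigma|_{\D}$, so $\sigma_\star(dd^cv)=\sum_{\ell=1}^k (f_\ell)_\star(dd^cv_\ell)$, since the various $\D$-charts around the $y_\ell$ are disjoint and cover $\sigma^{-1}(x_j)$. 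By Lemma \ref{L:wsh2} the right-hand side equals $dd^cu$, giving the claim. The cleanest way to present this is via testing against $\chi\in\cC_{X,0}^\infty(U)$ and invoking the defining formula for $dd^cu$ together with the local computation already carried out in the proof of Lemma \ref{L:wsh2}.

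The only real subtlety, and the step I would treat most carefully, is the bookkeeping of the correspondence between components $Y_\ell$ at $x_j$ and preimage points $y_\ell$: one must use that the number of points in $\sigma^{-1}(x_j)$ equals the number of irreducible components at $x_j$, that each $y_\ell$ carries a $\D$-chart on which $\sigma$ is a local normalization of the corresponding $Y_\ell$, and that these charts can be taken mutually disjoint. Once this identification is pinned down, Lemma \ref{L:wsh4} is essentially a globalization of Lemma \ref{L:wsh2}, and no new analytic input is needed beyond the local subharmonic extension already proved.
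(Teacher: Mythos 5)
Your proposal is correct and follows essentially the same route as the paper: the paper likewise notes that $v$ is subharmonic off $\sigma^{-1}(\Sigma)$ and upper bounded near the isolated points of $\sigma^{-1}(\Sigma\cap U)$ (hence extends subharmonically), and then verifies $dd^cu=\sigma_\star(dd^cv)$ by pairing with a test function $\chi\in\cC_{X,0}^\infty(U)$ and changing variables. The only cosmetic difference is that the paper performs this test-function computation globally in one line, whereas you patch together the local identity of Lemma \ref{L:wsh2} near each $x_j$; both are valid.
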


\begin{proof}
Note that $v\in SH(\sigma^{-1}(U\setminus\Sigma))$ 
and $v$ is upper bounded near each point of $\sigma^{-1}(\Sigma\cap U)$, 
so it extends to a subharmonic function on $\sigma^{-1}(U)$. 
If $\chi\in\cC_{X,0}^\infty(U)$ we have 
\[\langle dd^cu,\chi\rangle=\int_{U\setminus\Sigma}u\,dd^c\chi=
\int_{\sigma^{-1}(U)}v\,dd^c(\chi\circ\sigma)=
\langle dd^cv,\chi\circ\sigma\rangle.\]
\end{proof}

Let $L\to X$ be a holomorphic line bundle and $\{U_\alpha\}$ 
be an open cover of $X$ such that $L$ has a holomorphic frame 
$e_\alpha$ on $U_\alpha$. 
We define $H^0_w(X,L)$, respectively $H^0_c(X,L)$, 
by requiring that $S\in H^0_w(X,L)$, respectively $S\in H^0_c(X,L)$,
if and only if $S\in H^0(X\setminus\Sigma,L)$
and for any $\alpha$ we have $s_\alpha\in\cO_{X,w}(U_\alpha)$, 
respectively $s_\alpha\in\cO_{X,c}(U_\alpha)$,
where $S=s_\alpha e_\alpha$ on $U_\alpha\setminus\Sigma$.  

The notion of singular Hermitian metric $h$ on $L$ 
is defined exactly as in the smooth case 
(see \cite{D90}, \cite[p.\,97]{MM07}, \cite[Section 2.2]{CMM17})). 
We have $|e_\alpha|^2_h=e^{-2\varphi_\alpha}$, 
where $\varphi_\alpha\in L^1_{loc}(U_\alpha,\omega)$ 
are called the local weights of $h$. We assume in the sequel that 
$h$ has weakly subharmonic weights, 
i.e.\ $\varphi_\alpha\in WSH(U_\alpha)$. 
If $g_{\alpha\beta}=e_\beta/e_\alpha\in\cO^*_X(U_\alpha\cap U_\beta)$ 
are the transition functions of $L$ then 
$\varphi_\alpha=\varphi_\beta+\log|g_{\alpha\beta}|$ 
holds on $(U_\alpha\cap U_\beta)\setminus\Sigma$. Set 
\begin{equation}\label{e:Chern}
c_1(L,h)\,\vert_{ U_\alpha}=dd^c\varphi_\alpha.
\end{equation}
It follows from Lemma \ref{L:wsh2} that $c_1(L,h)$ 
is a well defined positive measure on $X$, 
called the curvature measure of $h$. 

Let $\sigma^\star L\to\wi X$ be the pullback of the line bundle $L$, 
endowed with the pullback metric $\sigma^\star h$. 
Since $\sigma^\star h$ has weight 
$\varphi_\alpha\circ\sigma$ on $\sigma^{-1}(U_\alpha)$, 
we infer by Lemma \ref{L:wsh4} that $\sigma^\star h$ 
has subharmonic weights and 
\[
c_1(L,h)=
\sigma_\star\big(c_1(\sigma^\star L,\sigma^\star h)\big).
\]

Using the Riemann removable singularity theorem, 
it follows easily that the map 
\begin{equation}\label{e:iso}
\sigma^\star:H^0_w(X,L^p)\to H^0\big(\wi X,\sigma^\star L^p\big)
\end{equation}
is well defined and an isomorphism. Hence by the Riemann-Roch theorem
\cite[16.9]{For81} or Siegel's lemma 
(see \cite[Lemma 2.2.6]{MM07}), there exists a constant $C>0$ such that 
\[\dim H^0_w(X,L^p)\leq Cp\, \text{ for all $p\geq1$.}\] 

\medskip

We show next that the Bergman spaces 
$H^0_{(2)}(X\setminus\Sigma,L^p)$ defined in \eqref{e:Bsreg} 
are finite dimensional, as they correspond to spaces of 
meromorphic section of $\sigma^\star L^p$ with poles in $R_\sigma$. 
We need the following simple lemma:

\begin{Lemma}\label{L:Ls}
Let $g(\zeta)=\sum_{j=-\infty}^{+\infty}a_j\zeta^j$ 
be a holomorphic function on $\D\setminus\{0\}$ such that 
$\int_{\D\setminus\{0\}}|g(\zeta)|^2|\zeta|^{2n}\,d\lambda<+\infty$ 
for some $n\in\Z$. Then $a_j=0$ for all $j\leq-n-1$.
\end{Lemma}

\begin{proof}
Let $\varepsilon\in(0,1)$. Using polar coordinates we obtain 
\[\int_{\{\varepsilon<|\zeta|<1\}}|g(\zeta)|^2|\zeta|^{2n}\,
d\lambda=2\pi\sum_{j=-\infty}^{+\infty}|a_j|^2
\int_\varepsilon^1r^{2j+2n+1}\,dr.\]
The conclusion follows by letting $\varepsilon\to0$.
\end{proof}

\begin{Proposition}\label{P:Bsreg}
The map $\sigma^\star:H^0_{(2)}(X\setminus\Sigma,L^p)\to 
H^0\big(\wi X,\sigma^\star L^p\otimes\cO_{\wi X}(R_\sigma)\big)$ 
is well defined and injective. We have 
$\dim H^0_{(2)}(X\setminus\Sigma,L^p)\leq Cp$ 
for all $p\geq1$, where $C>0$ is a constant.
\end{Proposition}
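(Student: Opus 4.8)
The plan is to work locally near each point $y\in\sigma^{-1}(\Sigma)$ and use the isomorphism furnished by the normalization together with the $L^2$-condition to control the order of the pole of $\sigma^\star S$ there. Since $\sigma$ restricts to a biholomorphism $\wi X\setminus\sigma^{-1}(\Sigma)\to X\setminus\Sigma$, any $S\in H^0_{(2)}(X\setminus\Sigma,L^p)$ pulls back to a holomorphic section $\sigma^\star S$ of $\sigma^\star L^p$ on $\wi X\setminus\sigma^{-1}(\Sigma)$; the content of the proposition is that this section extends meromorphically across $\sigma^{-1}(\Sigma)$ with poles bounded by $R_\sigma$.

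First I would fix $y\in\sigma^{-1}(x_j)$ with local coordinate $\zeta$, $y\equiv0$, so that $\sigma\,\vert_\D$ is the local normalization of the corresponding component $Y$ with ramification index $\alpha=\alpha(y)$. Choosing a local holomorphic frame $\wi e$ for $\sigma^\star L^p$ near $y$, I write $\sigma^\star S=s(\zeta)\,\wi e$ with $s$ holomorphic on $\D\setminus\{0\}$, so that $|\sigma^\star S|^2_{\sigma^\star h_p}=|s|^2e^{-2p\psi}$, where $\psi=\varphi_\alpha\circ\sigma$ is subharmonic by Lemma \ref{L:wsh4}, hence bounded above by some $M$ near $0$. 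Transporting the integral $\int_{Y\setminus\{x_j\}}|S|^2_{h_p}\,\omega$ to $\D\setminus\{0\}$ via the biholomorphism and using the lower bound $e^{-2p\psi}\geq e^{-2pM}$ together with the estimate \eqref{e:ram2} for $\sigma^\star\omega$, the finiteness of $\|S\|_p$ forces $\int_{\D\setminus\{0\}}|s(\zeta)|^2|\zeta|^{2\alpha}\,d\lambda<+\infty$. Lemma \ref{L:Ls} with $n=\alpha$ then yields that the Laurent coefficients of $s$ vanish in all degrees $\leq-\alpha-1$, i.e.\ $\sigma^\star S$ has a pole of order at most $\alpha(y)$ at $y$. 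Summing over $y\in\sigma^{-1}(\Sigma)$ shows $\sigma^\star S\in H^0\big(\wi X,\sigma^\star L^p\otimes\cO_{\wi X}(R_\sigma)\big)$, which proves the map is well defined. Injectivity is then immediate: if $\sigma^\star S=0$ then $S=0$ on $X\setminus\Sigma$ because $\sigma$ is biholomorphic there.

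Finally, the dimension estimate follows by composing injectivity with the elementary bound $h^0(\wi X,E)\leq\max(0,\deg E+1)$ for a line bundle $E$ on the compact Riemann surface $\wi X$, a consequence of Riemann--Roch \cite[16.9]{For81}. Since $\deg\big(\sigma^\star L^p\otimes\cO_{\wi X}(R_\sigma)\big)=p\deg(\sigma^\star L)+\deg R_\sigma$ grows at most linearly in $p$, we obtain $\dim H^0_{(2)}(X\setminus\Sigma,L^p)\leq Cp$ for all $p\geq1$, with $C$ depending only on $|\deg(\sigma^\star L)|$ and $\deg R_\sigma$.

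I expect the only genuine subtlety to be the first local step: correctly matching the weight $|\zeta|^{2\alpha}$ coming from the degeneration of $\sigma^\star\omega$ in \eqref{e:ram2} to the index $n=\alpha$ in Lemma \ref{L:Ls}, and verifying that mere upper boundedness of the subharmonic weight $\psi$ (rather than any positivity of the curvature) is all that is used, so that the argument runs entirely under assumptions (A) and (C).
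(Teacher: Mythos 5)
Your proposal is correct and follows essentially the same route as the paper's proof: bound the weakly subharmonic weight above by a constant $M$, use \eqref{e:ram2} to reduce the $L^2$-condition to $\int_{\D\setminus\{0\}}|s|^2|\zeta|^{2\alpha(y)}\,d\lambda<+\infty$, apply Lemma \ref{L:Ls} to bound the pole order by $\alpha(y)$, and conclude the dimension bound from injectivity plus Riemann--Roch on $\wi X$. The only cosmetic difference is that you spell out the degree computation for $\sigma^\star L^p\otimes\cO_{\wi X}(R_\sigma)$, whereas the paper simply cites Riemann--Roch or Siegel's lemma.
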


\begin{proof}
The holomorphic sections of 
$\sigma^\star L^p\otimes\cO_{\wi X}(R_\sigma)$ 
can be identified to meromorphic sections of 
$\sigma^\star L^p$ with poles in $R_\sigma$, 
so we have to show that if $S\in H^0_{(2)}(X\setminus\Sigma,L^p)$ 
and $y\in\sigma^{-1}(\Sigma)$ then $\sigma^\star S$ 
has a pole of order at most $\alpha(y)$ at the isolated singularity $y$. 

Let $x_j=\sigma(y)$ and $\D$ be the unit disc in a 
coordinate neighborhood of $y\equiv0$ such that $\sigma\,\vert_\D$ 
is the local normalization of an irreducible component $Y$ of $X$ at $x_j$. 
We may assume that $x_j$ has a neighborhood $U_\alpha$ 
on which $L$ has a local holomorphic frame $e_\alpha$ 
such that $U_\alpha\cap\Sigma=
\{x_j\}$ and $\D\subset\sigma^{-1}(U_\alpha)$. 
Then $S=s_\alpha e_\alpha^{\otimes p}$ for some 
$s_\alpha\in\cO_X(U_\alpha\setminus\{x_j\})$, and 
$|e_\alpha|_h=e^{-\varphi_\alpha}$, 
where $\varphi_\alpha\in WSH(U_\alpha)$. 
We may assume that $\varphi_\alpha\leq M$ on $U_\alpha$, 
for some constant $M$. Using \eqref{e:ram2} we obtain 
\begin{align*}
\|S\|_p^2 & \geq\int_{Y\setminus\{x_j\}}|s_\alpha|^2
e^{-2p\varphi_\alpha}\,\omega
\geq e^{-2pM}\int_{\D\setminus\{0\}}|s_\alpha\circ\sigma|^2\,
\sigma^\star\omega \\
& \geq C_2^{-1}e^{-2pM}
\int_{\D\setminus\{0\}}|s_\alpha(\sigma(\zeta))|^2
|\zeta|^{2\alpha(y)}\,d\lambda.
\end{align*}
By Lemma \ref{L:Ls} we infer that the function 
$s_\alpha\circ\sigma$ has a pole of order at most $\alpha(y)$ at $y$.

The map $\sigma^\star$ is clearly injective, since 
$\sigma:\wi X\setminus\sigma^{-1}(\Sigma)\to X\setminus\Sigma$ 
is biholomorphic. 
The last assertion follows from the Riemann-Roch theorem
\cite[16.9]{For81} or
Siegel's lemma (see \cite[Lemma 2.2.1]{MM07} and its proof).
\end{proof}

\medskip

Let $S\in H^0_{(2)}(X\setminus\Sigma,L)$, $S\neq0$. 
It follows from Proposition \ref{P:Bsreg} that $\sigma^\star S$ 
is a meromorphic section of $\sigma^\star L$, so it induces the divisor 
$\di(\sigma^\star S)$ and its associated signed measure 
$[\di(\sigma^\star S)]$ on $\wi X$, where 
\[\di(\sigma^\star S)=\sum_{y\in\wi X}
\ord(\sigma^\star S,y)y\,,\,\;[\di(\sigma^\star S)]=
\sum_{y\in\wi X}\ord(\sigma^\star S,y)\delta_y.\]
Moreover, the divisor $\di(\sigma^\star S)+R_\sigma$ is effective. 
We define
\begin{equation}\label{e:ord}
\ord(S,x_j)=\sum_{y\in\sigma^{-1}(x_j)}\ord(\sigma^\star S,y),\;x_j\in\Sigma.
\end{equation}

Writing $S=s_\alpha e_\alpha$, where 
$s_\alpha\in\cO_X(U_\alpha\setminus\Sigma)$, 
we have that $s_\alpha\circ\sigma$ is meromorphic on 
$\sigma^{-1}(U_\alpha)$, hence $\log|s_\alpha\circ\sigma|$ 
is locally the difference of two subharmonic functions. 
We infer that $\log|s_\alpha|$ is locally the difference 
of two weakly subharmonic functions on $U_\alpha$. 
By Lemma \ref{L:wsh1}, this implies that 
$\log|s_\alpha|\in L^1_{loc}(U_\alpha,\omega)$. 
Since $\log|s_\alpha|=\log|s_\beta|+\log|g_{\alpha\beta}|$ 
on $(U_\alpha\cap U_\beta)\setminus\Sigma$, 
we can define, using Lemma \ref{L:wsh2}, 
the signed measure $[\di(S)]$ on $X$ by setting 
\begin{equation}\label{e:div1}
[\di(S)]\,\vert_{U_\alpha}=dd^c\log|s_\alpha|.
\end{equation}

Note that $[\di(\sigma^\star S)]\,\vert_{U_\alpha}=
dd^c\log|s_\alpha\circ\sigma|$. Since $\sigma^\star S$ 
is a meromorphic section of $\sigma^\star L$, 
we see that $S$ has finitely many zeros 
$z_1,\ldots,z_k\in X\setminus\Sigma$. 
By Lemma \ref{L:wsh4} we obtain  
\begin{equation}\label{e:div2}
[\di(S)]=\sigma_\star\big([\di(\sigma^\star S)]\big)=
\sum_{j=1}^k\ord(S,z_j)\delta_{z_j}+
\sum_{j=1}^m\ord(S,x_j)\delta_{x_j},
\end{equation}
where $\ord(S,x_j)$ is defined in \eqref{e:ord}.
Let $[\di(S)]^\pm$ denote the positive and negative variations 
of the measure $[\di(S)]$. Then 
\[[\di(S)]^+=\sum_{j=1}^k\ord(S,z_j)\delta_{z_j}+
\sum_{j=1}^m\ord(S,x_j)^+\delta_{x_j},\;
[\di(S)]^-=\sum_{j=1}^m\ord(S,x_j)^-\delta_{x_j},\]
where $\ord(S,x_j)^+=\max\{\ord(S,x_j),0\}$, 
$\ord(S,x_j)^-=\max\{-\ord(S,x_j),0\}$. 

Since $\di(\sigma^\star S)+R_\sigma$ is effective, we infer that 
\begin{equation}\label{e:div3}
[\di(S)]^-\leq
\sum_{j=1}^m\Big(\sum_{y\in\sigma^{-1}(x_j)}\alpha(y)\Big)
\delta_{x_j}=\sigma_\star([R_\sigma]).
\end{equation}

Note that the function $\log|S|_h\in L^1(X,\omega)$, 
since $\log|S|_h=\log|s_\alpha|-\varphi_\alpha$ on $U_\alpha$. 
Hence \eqref{e:Chern} and \eqref{e:div1} yield the following 
version of the Lelong-Poincar\'e formula in this setting:
\begin{equation}\label{e:LP}
[\di(S)]=c_1(L,h)+dd^c\log|S|_h.
\end{equation}

\medskip

The preceding discussion carries over for sections $S\in H^0_w(X,L)$, 
$S\neq0$. As $\log|s_\alpha|\in WSH(U_\alpha)$, the measure $[\di(S)]$ 
defined in \eqref{e:div1} is now positive. Moreover, we have that 
$\sigma^\star S\in H^0(\wi X,\sigma^\star L)$, so the divisor 
$\di(\sigma^\star S)$  is effective and the measure $[\di(\sigma^\star S)]$ 
is positive. Formula \eqref{e:div2} and the Lelong-Poincar\'e formula \eqref{e:LP} 
hold for $[\di(S)]$.

\medskip

We give now the definitions of the Bergman kernel functions 
$P_{w,p}, P_{c,p}, P_p$ and Fubini-Study measures 
$\gamma_{w,p}, \gamma_{c,p}, \gamma_p$ of the Bergman spaces 
considered in this paper. We start with the spaces $H^0_{w,(2)}(X,L^p)$ 
and $H^0_{c,(2)}(X,L^p)$ defined in \eqref{e:Bsw}, respectively \eqref{e:Bsc}. 

Let $d_{w,p}=\dim H^0_{w,(2)}(X,L^p)$ and let $S^p_j$, 
$1\leq j\leq d_{w,p}$, be an orthonormal basis of $H^0_{w,(2)}(X,L^p)$. 
We write $S^p_j=s^p_{j,\alpha}e_\alpha^{\otimes p}$, 
where $s^p_{j,\alpha}\in\cO_{X,w}(U_\alpha)$. Then 
\begin{equation}\label{e:BFS1}
P_{w,p}(x)=\sum_{j=1}^{d_{w,p}}|S^p_j(x)|_{h_p}^2\;,\;\;
\gamma_{w,p}\,\vert_{U_\alpha}=dd^cu_{p,\alpha}, 
\text{ where } u_{p,\alpha}=\frac{1}{2}\,
\log\left(\sum_{j=1}^{d_{w,p}}|s^p_{j,\alpha}|^2\right).
\end{equation}
Note that $P_{w,p}(x)$ is defined for all $x\in X\setminus\Sigma$ 
such that if $x\in U_\alpha$ then $\varphi_\alpha(x)>-\infty$. 
Moreover, $u_{p,\alpha}\in WSH(U_\alpha)$ and $u_{p,\alpha}=
u_{p,\beta}+\log|g_{\alpha\beta}|$ on 
$(U_\alpha\cap U_\beta)\setminus\Sigma$, 
so by Lemma \ref{L:wsh2} $\gamma_{w,p}$ 
is a well defined positive measure on $X$. 
We have that $P_{w,p}$, $\gamma_{w,p}$ 
are independent of the choice of basis and 
\begin{equation}\label{e:BFS2}
\log P_{w,p}\,\vert_{U_\alpha}=2u_{p,\alpha}-2p\varphi_\alpha.
\end{equation}
By Lemma \ref{L:wsh1} we infer that $\log P_{w,p}\in L^1(X,\omega)$,
and 
\begin{equation}\label{e:BFS3}
\frac{1}{p}\,\gamma_{w,p}-c_1(L,h)=\frac{1}{2p}\,dd^c\log P_{w,p}\,.
\end{equation}
Moreover, as in \cite{CM15,CM13,CMM17}, 
one has the following variational formula,  
\begin{equation}\label{e:Bkvar}
P_{w,p}(x)=\max\big\{|S(x)|^2_{h_p}:\,S\in H^0_{w,(2)}(X,L^p),\;\|S\|_p=1\big\},
\end{equation}
for all $x\in X\setminus\Sigma$ where $P_{w,p}(x)$ is defined. 

Let $d_{c,p}=\dim H^0_{c,(2)}(X,L^p)$. 
One defines the Bergman kernel function $P_{c,p}$ 
and Fubini-Study measure $\gamma_{c,p}$ of the space 
$H^0_{c,(2)}(X,L^p)$ in the same way as in \eqref{e:BFS1}. 
The formulas \eqref{e:BFS2}, \eqref{e:BFS3}, \eqref{e:Bkvar} 
hold in this case as well.

\medskip

We next turn our attention to the Bergman kernel function $P_p$ 
and Fubini-Study measure $\gamma_p$ of the space 
$H^0_{(2)}(X\setminus\Sigma,L^p)$ defined in \eqref{e:Bsreg}. 
Proceeding as above, let $d_p=\dim H^0_{(2)}(X\setminus\Sigma,L^p)$ 
and $S^p_j$, $1\leq j\leq d_p$, be an orthonormal basis of 
$H^0_{(2)}(X\setminus\Sigma,L^p)$. 
We set  $S^p_j=s^p_{j,\alpha}e_\alpha^{\otimes p}$, 
where $s^p_{j,\alpha}\in\cO_X(U_\alpha\setminus\Sigma)$, 
and define for $x\in X\setminus\Sigma$,
\begin{equation}\label{e:BFS4}
P_p(x)=\sum_{j=1}^{d_p}|S^p_j(x)|_{h_p}^2\;,\;
\text{ and $\gamma_p\,\vert_{U_\alpha}=
dd^cu_{p,\alpha}$, where } u_{p,\alpha}=
\frac{1}{2}\,\log\left(\sum_{j=1}^{d_p}|s^p_{j,\alpha}|^2\right).
\end{equation}
Proposition \ref{P:Bsreg} shows that $u_{p,\alpha}\circ\sigma$ 
is locally the difference of two subharmonic functions on 
$\sigma^{-1}(U_\alpha)$. Hence $u_{p,\alpha}$ is 
locally the difference of two weakly subharmonic functions on $U_\alpha$. 
By Lemma \ref{L:wsh1} and  Lemma \ref{L:wsh2} we infer that 
$u_{p,\alpha}\in L^1_{loc}(U_\alpha,\omega)$ and $\gamma_p$ 
is a well defined signed measure on $X$. The analogue of \eqref{e:BFS2} 
in this setting shows that 
\begin{equation}\label{e:BFS5}
\log P_p\in L^1(X,\omega)\,,\,\;\frac{1}{p}\,\gamma_p-c_1(L,h)=
\frac{1}{2p}\,dd^c\log P_p\,.
\end{equation}
Moreover, the variational formula \eqref{e:Bkvar} also holds for $P_p$.

\begin{Lemma}\label{L:FSneg}
Let $\gamma_p^-$ be the negative variation of the measure $\gamma_p$. 
Then $\gamma_p^-\leq\sigma_\star([R_\sigma])$. In particular, 
$\gamma_p^-$ is supported in $\Sigma$.
\end{Lemma}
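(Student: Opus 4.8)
The plan is to lift everything to the normalization $\wi X$, where $\gamma_p$ becomes a difference of Fubini–Study–type measures with poles controlled by the ramification divisor $R_\sigma$, establish the lower bound there, and then push it back down to $X$.

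First I would introduce the signed measure $\wi\gamma_p$ on $\wi X$ defined locally by $\wi\gamma_p\,\vert_{\sigma^{-1}(U_\alpha)}=dd^c(u_{p,\alpha}\circ\sigma)$, with $u_{p,\alpha}$ as in \eqref{e:BFS4}. These local expressions patch to a global object because on overlaps the difference $u_{p,\alpha}\circ\sigma-u_{p,\beta}\circ\sigma=\log|g_{\alpha\beta}\circ\sigma|$ is harmonic (the $g_{\alpha\beta}$ are nonvanishing and holomorphic), hence annihilated by $dd^c$. Since $u_{p,\alpha}$ is locally a difference of two weakly subharmonic functions (as noted just before \eqref{e:BFS5}), I would apply Lemma \ref{L:wsh4} to each of the two pieces and subtract, using linearity of $dd^c$ and of $\sigma_\star$, to conclude $\gamma_p=\sigma_\star(\wi\gamma_p)$.

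Next I would bound $\wi\gamma_p$ from below on $\wi X$. Away from $\sigma^{-1}(\Sigma)$ the functions $s^p_{j,\alpha}\circ\sigma$ are holomorphic, so $u_{p,\alpha}\circ\sigma$ is subharmonic and $\wi\gamma_p\geq0$ there. Near a point $y\in\sigma^{-1}(\Sigma)$ with $y\equiv0$ and local coordinate $\zeta$, Proposition \ref{P:Bsreg} guarantees that each $s^p_{j,\alpha}\circ\sigma$ has a pole of order at most $\alpha(y)$, so every $\zeta^{\alpha(y)}(s^p_{j,\alpha}\circ\sigma)$ is holomorphic. Hence
\[
u_{p,\alpha}\circ\sigma+\alpha(y)\log|\zeta|=\frac{1}{2}\,\log\Big(\sum_j|\zeta^{\alpha(y)}(s^p_{j,\alpha}\circ\sigma)|^2\Big)
\]
is subharmonic near $0$. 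Since the paper's normalization gives $dd^c\log|\zeta|=\delta_y$, this yields $\wi\gamma_p\geq-\alpha(y)\delta_y$ near $y$. Combining the two cases over all $y\in\sigma^{-1}(\Sigma)$ gives $\wi\gamma_p\geq-[R_\sigma]$ on $\wi X$, and therefore $\wi\gamma_p^-\leq[R_\sigma]$.

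Finally I would push forward. For any signed measure $\nu$ on $\wi X$ one has $(\sigma_\star\nu)^-\leq\sigma_\star(\nu^-)$: indeed $\nu\geq-\nu^-$ forces $\sigma_\star\nu\geq-\sigma_\star(\nu^-)$, and the negative variation is the least positive measure with this domination property. Applied to $\nu=\wi\gamma_p$ this gives $\gamma_p^-\leq\sigma_\star(\wi\gamma_p^-)\leq\sigma_\star([R_\sigma])$, which is supported at $\Sigma=\sigma(\sigma^{-1}(\Sigma))$, exactly the claim. I expect the main subtlety to be this last step: the Jordan decomposition does not commute with $\sigma_\star$, so I must use the inequality $(\sigma_\star\nu)^-\leq\sigma_\star(\nu^-)$ rather than an equality, together with the care needed to justify $\gamma_p=\sigma_\star(\wi\gamma_p)$ even though $u_{p,\alpha}$ is only a difference of weakly subharmonic functions rather than an element of $WSH(U_\alpha)$.
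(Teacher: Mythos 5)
Your proposal is correct and follows essentially the same route as the paper's proof: both define the lift $\wi\gamma_p=dd^c(u_{p,\alpha}\circ\sigma)$ on $\wi X$, use Lemma \ref{L:wsh4} to get $\gamma_p=\sigma_\star(\wi\gamma_p)$, invoke Proposition \ref{P:Bsreg} to see that near $y\in\sigma^{-1}(\Sigma)$ the atom of $\wi\gamma_p$ at $y$ is at least $-\alpha(y)$, and push the resulting inequality $\wi\gamma_p+[R_\sigma]\geq0$ forward. The only cosmetic differences are that the paper writes $u_{p,\alpha}\circ\sigma=v+n\log|\zeta|$ with $v$ smooth and $n\geq-\alpha(y)$ where you factor out $\zeta^{\alpha(y)}$ directly, and it concludes from positivity of $\gamma_p+\sigma_\star([R_\sigma])$ rather than via the inequality $(\sigma_\star\nu)^-\leq\sigma_\star(\nu^-)$; these are equivalent.
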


\begin{proof}
Since $u_{p,\alpha}\circ\sigma$ is locally the difference of two 
subharmonic functions on $\sigma^{-1}(U_\alpha)$, 
$\wi\gamma_p\,\vert_{\sigma^{-1}(U_\alpha)}:=
dd^c(u_{p,\alpha}\circ\sigma)$ defines a signed measure 
$\wi\gamma_p$ on $\wi X$. By Lemma \ref{L:wsh4} we infer that 
$\gamma_p=\sigma_\star(\wi\gamma_p)$. 
Note that $\wi X\setminus\sigma^{-1}(\Sigma)$ 
is a positive set for $\wi\gamma_p$. 
Working locally near a point 
$y\equiv0\in\sigma^{-1}(\Sigma\cap U_\alpha)$ 
we have by Proposition \ref{P:Bsreg} that 
$u_{p,\alpha}(\sigma(\zeta))=v(\zeta)+n\log|\zeta|$, 
where $v$ is a smooth subharmonic function and 
$n\geq-\alpha(y)$. Hence $\wi\gamma_p(\{y\})=n\geq-\alpha(y)$. 
Therefore the measure $\wi\gamma_p+[R_\sigma]$ is positive, 
hence so is $\gamma_p+\sigma_\star([R_\sigma])$. 
The conclusion now follows.
\end{proof}

We conclude this section by noting that the spaces 
\begin{equation}\label{e:Bscomp}
H^0_{c,(2)}(X,L^p)\subset H^0_{w,(2)}(X,L^p)\subset 
H^0_{(2)}(X\setminus\Sigma,L^p),
\end{equation} 
are endowed with the same inner product, hence 
\begin{equation}\label{e:Bkcomp}
P_{c,p}(x)\leq P_{w,p}(x)\leq P_p(x),
\end{equation}
for every $x\in X\setminus\Sigma$  such that if 
$x\in U_\alpha$ then $\varphi_\alpha(x)>-\infty$.

\section{Proof of Theorems \ref{T:mt} and \ref{T:zero}}\label{S:proofs}

We start with some auxiliary results that are needed for the proof of Theorem \ref{T:mt}.

\begin{Lemma}\label{L:sh}
If $v$ is subharmonic on $\D$, $A=\{\zeta\in\C:\,\frac{1}{8}\leq|\zeta|\leq\frac{5}{8}\}$ and $|x|\leq\frac{1}{8}$, then 
\[v(x)\leq2\int_A|v|\,d\lambda.\]
\end{Lemma}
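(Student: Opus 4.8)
The plan is to combine the sub-mean value inequality for subharmonic functions with an integration over a range of radii, chosen so that the resulting averaging region is an annulus centered at $x$ which is contained in $A$. Since $v$ is subharmonic on $\D$, for every $r>0$ such that the closed disc $\{\zeta:\,|\zeta-x|\leq r\}$ is contained in $\D$ the sub-mean value property gives
\[
v(x)\leq\frac{1}{2\pi}\int_0^{2\pi}v(x+re^{i\theta})\,d\theta.
\]

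First I would multiply this inequality by $r$ and integrate $r$ over the interval $[1/4,1/2]$. Passing to polar coordinates centered at $x$, the right-hand side becomes an integral over the annulus $A'=\{\zeta:\,1/4\leq|\zeta-x|\leq1/2\}$, and one obtains
\[
v(x)\,\frac{(1/2)^2-(1/4)^2}{2}\leq\frac{1}{2\pi}\int_{A'}v\,d\lambda,
\qquad\text{that is,}\qquad
v(x)\leq\frac{16}{3\pi}\int_{A'}v\,d\lambda\leq\frac{16}{3\pi}\int_{A'}|v|\,d\lambda.
\]

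Next I would verify the two geometric facts that make the radii $1/4$ and $1/2$ work. Since $|x|\leq1/8$, the triangle inequality gives, for $\zeta\in A'$, both $|\zeta|\geq|\zeta-x|-|x|\geq\frac14-\frac18=\frac18$ and $|\zeta|\leq|\zeta-x|+|x|\leq\frac12+\frac18=\frac58$, so $A'\subset A$; moreover $|x|+\frac12\leq\frac58<1$ shows that the closed disc of radius $\frac12$ about $x$ lies in $\D$, so the inequality above is legitimate for all radii used. Hence $\int_{A'}|v|\,d\lambda\leq\int_A|v|\,d\lambda$, and since $\frac{16}{3\pi}<2$ the desired estimate $v(x)\leq2\int_A|v|\,d\lambda$ follows.

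The argument presents no serious obstacle; the only point needing a little care is the interchange of the $\theta$- and $r$-integrations and the passage to polar coordinates, which is justified by Tonelli's theorem because $v\in L^1_{loc}(\D)$ (being subharmonic), so that $|v|$ is integrable over the compact annulus $A'$. The constant $2$ is not sharp: the computation actually produces the smaller constant $16/(3\pi)$, and this slack is exactly what lets one replace $v$ by $|v|$ and enlarge $A'$ to $A$ without tracking a precise factor.
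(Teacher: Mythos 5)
Your proposal is correct and follows essentially the same route as the paper: the authors also apply the sub-mean value inequality over the annulus $\{\tfrac14\le|\zeta-x|\le\tfrac12\}$, integrate in polar coordinates to get $\tfrac{3}{32}v(x)\le\tfrac{1}{2\pi}\int_A|v|\,d\lambda$, and conclude since $\tfrac{16}{3\pi}<2$. The geometric verifications you spell out (that this annulus lies in $A$ and in $\D$) are exactly the ones implicit in the paper's one-line proof.
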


\begin{proof}
Let $A_x=\{\zeta\in\C:\,\frac{1}{4}\leq|\zeta-x|\leq\frac{1}{2}\}$. Since $A_x\subset A$ we obtain using the subaverage inequality that
\[\frac{3}{32}\,v(x)\leq\frac{1}{2\pi}\,\int_\frac{1}{4}^\frac{1}{2}\int_0^{2\pi}v(x+re^{it})\,rdtdr\leq\frac{1}{2\pi}\,\int_{A_x}|v|\,d\lambda\leq\frac{1}{2\pi}\,\int_A|v|\,d\lambda.\]
\end{proof}

We state next a general result about the asymptotics of the logarithms of Bergman kernels. Let $Y$ be a complex manifold of dimension $n$, $\omega$ be a Hermitian form on $Y$ and $(L,h)$ be a singular Hermitian holomorphic line bundle on $Y$. One defines the Bergman spaces $H^0_{(2)}(Y,L^p)$ in analogy to \eqref{e:Bsreg} using the metric $h_p$ induced by $h$ on $L^p$ and the volume form $\frac{\omega^n}{n!}$ on $Y$. Let $V_p\leq H^0_{(2)}(Y,L^p)$ be a subspace of dimension $m_p$ and define the Bergman kernel function $Q_p$ of $V_p$ as in \eqref{e:BFS4}. Note that $\log Q_p$ is locally the difference of integrable functions, so $\log Q_p\in L^1_{loc}(Y,\omega^n)$.

\begin{Lemma}\label{L:Bkf}
In the above setting, assume that:

\noindent
(i) $m_p\leq cp^n$ holds for all $p\geq1$, with some constant $c>0$; 

\noindent
(ii) every $x\in Y$ has a neighborhood $U_x$ such that $Q_p\geq\varepsilon_x$ holds $\omega^n$-a.e.\ on $U_x$ for all $p$ sufficiently large, with some constant $\varepsilon_x>0$. 

Then $\frac{1}{p}\log Q_p\to0$ as $p\to+\infty$, in $L^1_{loc}(Y,\omega^n)$. 
\end{Lemma}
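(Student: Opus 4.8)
The plan is to prove the convergence $\frac{1}{p}\log Q_p \to 0$ in $L^1_{loc}$ by establishing matching upper and lower bounds on $\log Q_p$ that, after dividing by $p$, both tend to $0$. The lower bound is essentially given: by hypothesis (ii), every point has a neighborhood on which $Q_p \geq \varepsilon_x$ for large $p$, so locally $\frac{1}{p}\log Q_p \geq \frac{1}{p}\log\varepsilon_x \to 0$. The real work is the matching \emph{upper} bound, and here I expect the main obstacle to lie.

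For the upper bound I would use the variational characterization of the Bergman kernel together with the subaverage inequality for subharmonic functions, exploiting Lemma~\ref{L:sh}. The strategy is to show that $\log Q_p$, suitably localized, is bounded above by a subharmonic-type quantity whose $L^1$-norm grows like $o(p)$. Concretely, writing things in a local frame $e_\alpha$ with weight $\varphi_\alpha$, one has $\log Q_p\,\vert_{U_\alpha} = 2u_{p,\alpha} - 2p\varphi_\alpha$ where $u_{p,\alpha} = \frac{1}{2}\log\big(\sum_j |s^p_{j,\alpha}|^2\big)$ is plurisubharmonic (being the log of a sum of squares of moduli of holomorphic functions). The key inequality I would aim to prove is a pointwise bound of the form $u_{p,\alpha}(x) \leq \frac{1}{2}\log\big(m_p\,\|\cdot\|\text{-type constant}\big) + (\text{local } L^1 \text{ average})$, obtained by combining the variational formula $Q_p(x) = \max\{|S(x)|^2_{h_p} : \|S\|=1\}$ with the sub-mean-value property applied to the plurisubharmonic function $\log|s|^2$ of a unit-norm section realizing the max.

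The heart of the argument is a \emph{local reproducing estimate}: for a unit-norm section $S$, the value $|S(x)|_{h_p}$ is controlled by the $L^2$-norm of $S$ on a small ball around $x$, which equals $1$ at most. Using a local normalization reducing to the disc, I would apply the subaverage inequality (Lemma~\ref{L:sh}, or a direct mean-value estimate) to the subharmonic function $\log|s^p_{j,\alpha}|^2 - 2p\varphi_\alpha$ to get $\frac{1}{p}\log Q_p(x) \leq \frac{1}{p}\log m_p + \frac{C}{p}\int_{U_x}|\log Q_p|\,\omega^n + o(1)$. Since $m_p \leq cp^n$ by hypothesis (i), the term $\frac{1}{p}\log m_p = \frac{\log(cp^n)}{p} \to 0$. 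The local $L^1$ term is then absorbed using the already-established lower bound (which makes $|\log Q_p|$ essentially equal to $(\log Q_p)^+$ plus a controlled negative part), and a bootstrap or standard compactness argument for sequences of subharmonic functions with uniformly bounded $L^1$-norms closes the loop.

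\textbf{The hard part} will be making the local reproducing estimate quantitatively uniform across the manifold despite the singular metric $h$: the weight $\varphi_\alpha$ may have $-\infty$ poles, so pointwise bounds on $|S(x)|_{h_p}$ degenerate where $\varphi_\alpha = -\infty$. I expect to circumvent this by working with the integrated ($L^1$) formulation throughout rather than pointwise bounds, using that $\varphi_\alpha \in WSH$ is $\omega$-integrable (Lemma~\ref{L:wsh1}) and that subharmonicity gives submean-value control even near polar sets. The convergence in $L^1_{loc}$ (rather than uniform or pointwise) is exactly what lets this succeed: one shows any subsequence of $\frac{1}{p}\log Q_p$ has a further subsequence converging in $L^1_{loc}$, identifies the limit as a constant $\geq 0$ via the lower bound and $\leq 0$ via the upper bound plus Fatou/Hartogs-type lemmas for subharmonic functions, forcing the limit to be $0$.
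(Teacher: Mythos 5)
Your overall architecture (lower bound immediate from (ii), the upper bound as the real work, then a compactness argument for subharmonic functions) is the classical Tian/Shiffman--Zelditch route, and it is genuinely different from what the paper does. The paper's proof is far shorter and avoids every difficulty you flag: since $Q_p$ is the Bergman kernel function of an $m_p$-dimensional space of $L^2$-normalized sections, one has the trace identity $\int_Y Q_p\,\frac{\omega^n}{n!}=m_p$; Jensen's inequality (concavity of $\log$) applied on $U_x$ then gives
\[
\int_{U_x}(\log Q_p)\,\omega^n\;\leq\;C_x\log\Big(\tfrac{1}{C_x}\int_{U_x}Q_p\,\omega^n\Big)\;\leq\;nC_x\log p+C_x'
\]
directly from hypothesis (i), with no pointwise upper bound, no reproducing estimate, and no reference to the weights of $h$ at all. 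Combining this $O(\log p)$ integral bound with hypothesis (ii), the functions $f_p=\frac{1}{p}(\log Q_p-\log\varepsilon_x)$ are nonnegative a.e.\ on $U_x$ for large $p$ and have integrals tending to $0$, which is exactly $L^1(U_x)$-convergence. This is the one idea your proposal misses: you never use the trace identity, and you try to manufacture the integral upper bound from a pointwise one.

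As a proof of the lemma \emph{as stated}, your route also has a genuine gap. The lemma assumes only that $(L,h)$ is a singular Hermitian line bundle on a complex manifold $Y$, i.e.\ the local weights $\varphi_\alpha$ are merely in $L^1_{loc}$; there is no subharmonicity, upper semicontinuity, or local upper-boundedness hypothesis. Your local reproducing estimate needs $\sup_B\varphi_\alpha<+\infty$ to pass from $\int_B|s|^2\,dV$ to $\int_B|s|^2e^{-2p\varphi_\alpha}\,dV$, and your final identification of the limit (showing the $L^1_{loc}$ limit $u$ of $\frac{1}{p}u_{p,\alpha}$ equals $\varphi_\alpha$) needs $\varphi_\alpha$ to be upper semicontinuous and subharmonic. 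Neither is available in the lemma's generality. Moreover, even granting subharmonic weights (as in the paper's application on $X\setminus\Sigma$), the intermediate inequality you propose integrates to $\frac{1}{p}\int\log Q_p\leq O(\tfrac{\log p}{p})+2\int(\sup_{B(\cdot,r)}\varphi_\alpha-\varphi_\alpha)$, whose last term does not vanish; so the ``absorption'' step is not a small technicality but precisely the unfinished heart of your argument, and closing it requires the full Hartogs-type compactness machinery rather than a bootstrap. In short: your plan can be completed in the paper's application, but it proves less than the lemma claims and is substantially harder than the two-line Jensen argument the paper actually uses.
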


\begin{proof}
We may assume that $C_x:=\int_{U_x}\omega^n<+\infty$. It suffices to show that $\frac{1}{p}\log Q_p\to0$ in $L^1(U_x,\omega^n)$ for each $x\in Y$. Note that $\int_YQ_p\,\frac{\omega^n}{n!}=m_p$. Using Jensen's inequality and hypothesis $(i)$ we get 
\begin{equation}\label{e:logQ}
\int_{U_x}(\log Q_p)\,\omega^n\leq C_x\log\Big(\frac{1}{C_x}\int_{U_x}Q_p\,\omega^n\Big)\leq nC_x\log p+C'_x
\end{equation}
for all $p\geq1$, with some constant $C'_x>0$. If $f_p:=\frac{1}{p}\,(\log Q_p-\log\varepsilon_x)$ then by $(ii)$, $f_p\geq0$ $\omega^n$-a.e.\ on $U_x$ for all $p$ sufficiently large. Hence $\int_{U_x}f_p\,\omega^n\to0$ by \eqref{e:logQ}. This implies the conclusion.
\end{proof}

\smallskip

We now introduce the geometric setting and notation needed to describe the local structure of $X$ near the singular points. Let $\sigma:\wi X\to{\mathbb P}^N$ be the normalization of $X$ from (C). For each $x_j\in\Sigma$ we can find a neighborhood $V_j\subset X$ of $x_j$ with the following properties: 

\smallskip

(P1) $\ov V_j\cap\ov V_k=\emptyset$ for $1\leq j<k\leq m$.

\smallskip

(P2) there exist coordinates $(z_1,\ldots,z_N)$ on $\C^N\hookrightarrow{\mathbb P}^N$ and a polydisc $D_j\subset\C^N$ centered at $0$ such that $x_j\equiv 0$, $V_j=X\cap D_j$, and $V_j\cap\{z_1=0\}=\{0\}$.

\smallskip

(P3) $Y_1,\ldots,Y_{k_j}$ denote the irreducible components of $V_j$. 

\smallskip

(P4) for $1\leq\ell\leq k_j$, there exists $y_\ell\in\sigma^{-1}(x_j)$ and a coordinate neighborhood of $y_\ell\equiv 0$ containing $\D$, such that $\sigma_\ell:=\sigma\,\vert_\D:\D\to D_j$ is a local normalization of $Y_\ell$ as in \eqref{e:ln}. 

\smallskip

Note that by (P1), any two irreducible components of $V_j$ intersect only at $x_j$. In the proof of Theorem \ref{T:mt} we will have to work with a different Hermitian form on $X\setminus\Sigma$, which is provided by the following lemma.

\begin{Lemma}\label{L:Om}
Let $\Omega:=\sigma_\star\Big(\wi\omega\,\vert_{\wi X\setminus\sigma^{-1}(\Sigma)}\Big)$. Then $\Omega$ is a Hermitian form which verifies $\ric_\Omega\geq-2\pi B\Omega$ and $\Omega\geq a\omega$ on $X\setminus\Sigma$, with some constants $a,B>0$.
\end{Lemma}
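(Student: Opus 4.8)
The plan is to transport the whole problem to the compact Riemann surface $\wi X$ via the normalization map, where compactness furnishes all the uniform bounds at once. Recall from Section~\ref{S:BFS} that $\sigma$ restricts to a biholomorphism $\sigma:\wi X\setminus\sigma^{-1}(\Sigma)\to X\setminus\Sigma$; hence $\Omega=(\sigma^{-1})^\star\wi\omega$ is a smooth positive $(1,1)$-form, that is, a Hermitian form on $X\setminus\Sigma$, and $\sigma^\star\Omega=\wi\omega$ on $\wi X\setminus\sigma^{-1}(\Sigma)$. Both asserted inequalities are inequalities between $(1,1)$-forms on $X\setminus\Sigma$, so pulling back along this biholomorphism reduces them to the corresponding inequalities between $\wi\omega$ and $\sigma^\star\omega$, respectively between $\ric_{\wi\omega}$ and $\wi\omega$, on $\wi X\setminus\sigma^{-1}(\Sigma)$.

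For the bound $\Omega\geq a\omega$ I would first observe that, since $\omega$ is the restriction to $X$ of a smooth Hermitian form in the ambient space and $\sigma$ is holomorphic, the pullback $\sigma^\star\omega$ extends to a smooth nonnegative $(1,1)$-form on all of $\wi X$; near each $y\in\sigma^{-1}(\Sigma)$ this is made explicit by \eqref{e:ram2}, which shows that $\sigma^\star\omega$ vanishes to order $2\alpha(y)$. On the Riemann surface $\wi X$ one may write $\sigma^\star\omega=\rho\,\wi\omega$ for a nonnegative function $\rho$, and since $\wi\omega>0$ everywhere and both forms are continuous, $\rho$ is continuous on the compact space $\wi X$, hence bounded above by some $M>0$. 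Thus $\sigma^\star\omega\leq M\wi\omega=M\sigma^\star\Omega$ on $\wi X$, and restricting to $\wi X\setminus\sigma^{-1}(\Sigma)$ and transferring back through the biholomorphism gives $\omega\leq M\Omega$, i.e.\ $\Omega\geq a\omega$ with $a=1/M$.

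For the Ricci bound I would use that $\wi\omega$ is a smooth positive Hermitian form on the compact surface $\wi X$, so its Ricci form $\ric_{\wi\omega}$ is a smooth real $(1,1)$-form and may be written $\ric_{\wi\omega}=\psi\,\wi\omega$ for a smooth function $\psi$; since $\wi X$ is compact, $\psi$ is bounded below, so there is $B>0$ with $\ric_{\wi\omega}\geq-2\pi B\,\wi\omega$ on $\wi X$. The Ricci form is a biholomorphic invariant: in a local coordinate $\zeta$ on $\wi X\setminus\sigma^{-1}(\Sigma)$ the density of $\sigma^\star\Omega$ is $|\sigma'|^2$ times the pullback of the density of $\Omega$, and since $\log|\sigma'|^2$ is harmonic there (as $\sigma'$ is holomorphic and nonvanishing), the factor $|\sigma'|^2$ contributes nothing and one gets $\sigma^\star(\ric_\Omega)=\ric_{\sigma^\star\Omega}=\ric_{\wi\omega}$. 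Transferring the inequality $\ric_{\wi\omega}\geq-2\pi B\,\wi\omega$ back through $\sigma^{-1}$ yields $\ric_\Omega\geq-2\pi B\,\Omega$ on $X\setminus\Sigma$.

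The one point demanding care — and the step I would regard as the main obstacle — is the behaviour at the ramification points $\sigma^{-1}(\Sigma)$: one must know that $\sigma^\star\omega$ extends across these points with a globally bounded ratio to $\wi\omega$, for it is precisely this that lets compactness of $\wi X$ produce a single constant $M$. This is exactly the content of \eqref{e:ram2}, which guarantees that $\sigma^\star\omega$ vanishes (rather than blows up) at $\sigma^{-1}(\Sigma)$ and is therefore dominated by $\wi\omega$ there. No such difficulty arises for the Ricci estimate, since that computation is carried out entirely on $\wi X$, where $\wi\omega$ is smooth and strictly positive.
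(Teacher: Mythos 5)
Your proposal is correct and follows essentially the same route as the paper: compactness of $\wi X$ together with biholomorphic invariance of the Ricci form gives $\ric_\Omega\geq-2\pi B\Omega$, and the comparison $\Omega\geq a\omega$ comes from bounding $\sigma^\star\omega$ by $\wi\omega$, with the behaviour at the ramification points controlled exactly as in \eqref{e:ram2}. The only cosmetic difference is that you phrase the bound $\sigma^\star\omega\leq M\wi\omega$ globally on the compact surface $\wi X$, whereas the paper checks it locally near each point of $\sigma^{-1}(\Sigma)$ and lets compactness handle the rest.
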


\begin{proof}
As $\wi X$ is compact, there exists a constant $B>0$ such that $\ric_{\wi\omega}\geq-2\pi B\wi\omega$ on $\wi X$. Since $\sigma:\wi X\setminus\sigma^{-1}(\Sigma)\to X\setminus\Sigma$ is biholomorphic, $\Omega$ is a Hermitian form with $\ric_\Omega\geq-2\pi B\Omega$ on $X\setminus\Sigma$. For $x_j\in\Sigma$, let $Y_\ell$ be a component of $X$ at $x_j$ as in (P3), and $\sigma_\ell$ be as in (P4). We infer from \eqref{e:ram2} that $\sigma_\ell^\star\omega\leq C\wi\omega$ holds on $\D$, with some constant $C>0$. Hence $\omega\leq C\Omega$ on $Y_\ell\setminus\{x_j\}$. This yields the conclusion.
\end{proof}

The function $\phi$ constructed in the next lemma will be used to obtain a suitable modified metric on $L$. 

\begin{Lemma}\label{L:phi}
Let $X,\Sigma,\omega,\sigma$ verify (A), (C) and $\Omega$ be as in Lemma \ref{L:Om}. If $U\supset\Sigma$ is an open set, then there exists a continuous function $\phi:X\to\R$ supported in $U$ such that $\phi\leq0$ on $X$, $\phi$ is smooth on $X\setminus\Sigma$, and $A\omega+dd^c\phi\geq b\Omega$ on $X\setminus\Sigma$ for some constants $A,b>0$.
\end{Lemma}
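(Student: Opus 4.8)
The plan is to exploit the dichotomy between the two regions of $X$. Away from $\Sigma$ both $\omega$ and $\Omega$ are nondegenerate smooth Hermitian forms, so after scaling $\omega$ by a large constant $A$ we dominate $b\Omega$ for free; near $\Sigma$, however, $\omega$ degenerates (its pullback under the normalization vanishes to order $2\alpha$ by \eqref{e:ram2}) while $\Omega$ pulls back to the nondegenerate form $\wi\omega$, so there we must use the term $dd^c\phi$ and force $\phi$ to be strictly subharmonic in the normalization coordinate. The point that makes this compatible with $\phi\leq0$ is that a strictly subharmonic function may still be negative: $|\zeta|^2$ minus a large constant is negative yet has positive $dd^c$.

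I would first construct $\phi$. Fix, for each $x_j\in\Sigma$ and each branch, the local normalization $\sigma_\ell:\D\to Y_\ell$ from (P4), with coordinate $\zeta$ on $\D$; shrinking the discs I may assume $\sigma_\ell(\{|\zeta|<\rho\})\subset U$ for all branches, for some small $\rho>0$. Choose a smooth $F:[0,\infty)\to(-\infty,0]$ with $F(s)=cs-C$ for $s\leq(\rho/2)^2$ and $F(s)=0$ for $s\geq\rho^2$, where $c>0$ and $C>0$ is large enough that $F\leq0$ throughout (a routine monotone smooth interpolation on $[(\rho/2)^2,\rho^2]$). The key observation is that $F(|\zeta|^2)$ is a \emph{smooth} function of $\zeta$ on all of $\D$, taking the same value $F(0)=-C$ at $0$ on every branch. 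Hence setting $\psi=F(|\zeta|^2)$ near each $y\in\sigma^{-1}(\Sigma)$ and $\psi=0$ elsewhere defines a smooth function on the compact Riemann surface $\wi X$ that is constant on each fiber $\sigma^{-1}(x_j)$; it therefore descends to a function $\phi$ on $X$ with $\phi\circ\sigma=\psi$. Since $\sigma$ is a proper quotient map and is biholomorphic off $\Sigma$, the function $\phi$ is continuous on $X$, smooth on $X\setminus\Sigma$, satisfies $\phi\leq0$, and is supported in $U$.

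It remains to verify $A\omega+dd^c\phi\geq b\Omega$ on $X\setminus\Sigma$ for suitable $A,b>0$. Since $\sigma$ is biholomorphic off $\Sigma$, this is equivalent to $A\sigma^\star\omega+dd^c\psi-b\wi\omega\geq0$ on $\wi X\setminus\sigma^{-1}(\Sigma)$, using $\sigma^\star\Omega=\wi\omega$ and $dd^c(\phi\circ\sigma)=dd^c\psi$. Write $\beta_1=\frac{i}{2}\,d\zeta\wedge d\ov\zeta$, and note $\wi\omega\leq C'\beta_1$ on each $\D$. I would split into three regions. On $\{|\zeta|\leq\rho/2\}$ one has $dd^c\psi=\frac{2c}{\pi}\,\beta_1$, so $A\sigma^\star\omega+dd^c\psi-b\wi\omega\geq(\frac{2c}{\pi}-bC')\beta_1\geq0$ once $b\leq\frac{2c}{\pi C'}$, \emph{independently of $A$}; this is exactly where the degeneracy of $\omega$ is harmless. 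On the transition annulus $\{\rho/2\leq|\zeta|\leq\rho\}$ one has $\sigma^\star\omega\geq c'\beta_1$ with $c'>0$ by \eqref{e:ram2}, while $dd^c\psi\geq-M'\beta_1$ is bounded below, so the estimate holds once $Ac'\geq M'+bC'$. Finally, on the compact set $\wi X\setminus\bigcup_y\{|\zeta|<\rho\}$, where $\psi\equiv0$, both $\sigma^\star\omega$ and $\wi\omega$ are nondegenerate, hence $\wi\omega\leq C''\sigma^\star\omega$ and $A\sigma^\star\omega-b\wi\omega\geq0$ once $A\geq bC''$. As $\Sigma$ is finite, all constants may be taken uniform; fixing $b$ first from the innermost region and then $A$ large enough for the other two completes the argument. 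The only genuine difficulty is reconciling sign, regularity across branches, and the curvature lower bound simultaneously; this is resolved by the model $F(|\zeta|^2)$, which is automatically smooth on $\wi X$ and negative yet strictly subharmonic near $0$, after which the estimate is the routine three-region comparison above.
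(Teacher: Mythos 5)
Your proof is correct, but it constructs $\phi$ by a genuinely different mechanism than the paper. The paper builds $\phi$ downstairs on $X$: near each $x_j$ it restricts to $X$ the ambient plurisubharmonic functions $w_\ell(z)=\log\bigl(1+|z_1|^{2/s_\ell}\bigr)$, chosen so that the Puiseux normal form $z_1\circ\sigma_\ell=\zeta^{s_\ell}$ turns the pullback into $\log(1+|\zeta|^2)$, which is strictly subharmonic at $0$; it then subtracts constants, multiplies by cutoffs, and runs the same ``strictly positive near $\Sigma$, dominated by $A\omega$ away from $\Sigma$'' comparison that you use. You instead build the model function $F(|\zeta|^2)$ upstairs on $\widetilde X$ and push it down, observing that constancy on the fibers over $\Sigma$ plus properness of $\sigma$ gives a continuous descent, and that the lemma only demands continuity at $\Sigma$ (not subharmonicity there), smoothness off $\Sigma$, and the curvature bound on $X\setminus\Sigma$ — all of which survive the push-down since $\sigma$ is biholomorphic off $\Sigma$ and $\sigma^\star\Omega=\widetilde\omega$. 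What the paper's route buys is a $\phi$ that is genuinely subharmonic in a neighborhood of $\Sigma$ as a function on the complex space $X$ (restriction of an ambient psh function), which is the more canonical object and avoids any discussion of descent; what your route buys is independence from the ambient embedding and the normal form of the branches — you never need the coordinate $z_1$ or the exponents $s_\ell$, only the ramification estimate \eqref{e:ram2} on the transition annulus. Your identification of the crux — that $\sigma^\star\omega$ vanishes to order $2\alpha$ at the preimage points while $\widetilde\omega$ does not, so near $\Sigma$ the positivity must come entirely from $dd^c\phi$ and hence $b$ must be fixed before $A$ — matches the logic of the paper's proof exactly, and your three-region bookkeeping of the constants is sound.
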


\begin{proof}
For $1\leq j\leq m$, we choose a neighborhood $V_j\subset U$ of $x_j$ such that properties (P1)-(P4) are satisfied. Fix now $x_j\in\Sigma$. Since $V_j\cap\{z_1=0\}=\{0\}$, by shrinking $D_j$ we may assume that for each $Y_\ell$ we have 
\[\sigma_\ell(\zeta)=(\zeta^{s_\ell},f_{\ell,2}(\zeta),\ldots,f_{\ell,N}(\zeta)),\;\zeta\in\D,\] 
where $s_\ell-1\geq\alpha(y_\ell)$ (see \eqref{e:ram1}, \eqref{e:ramdiv}) and $f_{\ell,l}$ are holomorphic in $\D$. We consider the plurisubharmonic function $w_\ell(z_1,\ldots,z_N)=\log\big(1+|z_1|^{2/s_\ell}\big)$ on $\C^N$, and define 
\[\rho_j=\sum_{\ell=1}^{k_j}w_\ell\,\vert_{V_j}-M_j,\] 
where $M_j$ is chosen so that $\rho_j<0$ on $V_j$. Then $\rho_j$ is continuous and subharmonic on $V_j$, and it is smooth on $V_j\setminus\{0\}$ since $z_1\neq0$ if $(z_1,\ldots,z_N)\in V_j\setminus\{0\}$. Moreover, 
\begin{equation}\label{e:wj}
dd^c(\rho_j\circ\sigma_\ell)\geq dd^c(w_\ell\circ\sigma_\ell)=dd^c\log(1+|\zeta|^2)\geq\frac{i}{4\pi}\,d\zeta\wedge d\ov\zeta
\end{equation}
holds on $\D$, for each $\ell=1,\ldots,k_j$. 

Let $\chi_j\in\cC_{X,0}^\infty(V_j)$ be such that $\chi_j\geq0$ on $X$ and $\chi_j=1$ on an open set $W_j$ containing $x_j$. We define 
\[\phi=\sum_{j=1}^m\chi_j\rho_j.\] 
Then $\phi\leq0$ is continuous on $X$, $\supp\phi\subset U$, and $\phi$ is smooth on $X\setminus\Sigma$. Since $\phi=\rho_j$ on $W_j$, $\phi$ is subharmonic in the neighborhood $W:=\bigcup_{j=1}^m W_j$ of $\Sigma$. It follows that there exists a constant $M>0$ such that $dd^c\phi\geq-M\Omega$ on $X\setminus\Sigma$. Since $\phi=\rho_j$ on $W_j$ we infer by \eqref{e:wj} that $dd^c(\phi\circ\sigma)\geq b\wi\omega$ holds on $\sigma^{-1}(W)$ for some constant $b>0$, hence $dd^c\phi\geq b\Omega$ on $W\setminus\Sigma$. Let $c>0$ be a constant such that $\omega\geq c\Omega$ on $X\setminus W$. For $A>0$ we obtain 
\begin{align*}
A\omega+dd^c\phi & \geq(Ac-M)\Omega\, \text{ on } X\setminus W, \\
A\omega+dd^c\phi & \geq b\Omega\, \text{ on } W\setminus\Sigma.
\end{align*}
The conclusion follows by choosing $A=(M+b)/c$.
\end{proof}

\smallskip

\begin{proof}[Proof of Theorem \ref{T:mt}] $(i)$ We show first that 
\begin{equation}\label{e:creg}
\frac{1}{p}\,\log P_{c,p}\to0,\;\frac{1}{p}\,
\log P_{w,p}\to0,\;\frac{1}{p}\,\log P_p\to0,\, 
\text{ in }L^1_{loc}(X\setminus\Sigma,\omega).
\end{equation}

By Proposition \ref{P:Bsreg}, \eqref{e:Bscomp} and \eqref{e:Bkcomp}, 
this will follow from Lemma \ref{L:Bkf} once we show that hypothesis 
$(ii)$ of Lemma \ref{L:Bkf} holds for $Q_p=P_{c,p}$. 
Let $x\in X\setminus\Sigma$ and $U_\alpha$ be a coordinate neighborhood 
of $x\equiv0$ such that $\ov U_\alpha\subset X\setminus\Sigma$ and $L$ 
has a holomorphic frame $e_\alpha$ on $U_\alpha$. 
Set $|e_\alpha|_h=e^{-\varphi_\alpha}$, 
so $\varphi_\alpha\in SH(U_\alpha)$ by (B). 

For each $x_j\in\Sigma$ we choose a neighborhood 
$V_j\subset X\setminus\ov U_\alpha$ of $x_j$ 
such that properties (P1)-(P4) are satisfied. 
We define the function $\rho_j$ on $V_j$ by setting 
\begin{equation}\label{e:rho} 
\rho_j=\big(\alpha(y_\ell)+1\big)\log|\sigma_\ell^{-1}| 
\text{ on } Y_\ell\setminus\{x_j\},\;\ell=1,\ldots,k, \text{ and } 
\rho_j(x_j)=-\infty,
\end{equation}
where $\alpha(y_\ell)$ is the ramification index of $\sigma$ at $y_\ell$. 
By Lemma \ref{L:wsh3} it follows that $\rho_j\in SH(V_j)$. 
Moreover $\rho_j<0$ and $\rho_j$ is smooth on $V_j\setminus\{x_j\}$. 
Let $\chi_j\in\cC_{X,0}^\infty(V_j)$ be such that $\chi_j\geq0$ 
on $X$ and $\chi_j=1$ on an open set $W_j$ containing $x_j$. 
We define $\eta=\sum_{j=1}^m\chi_j\rho_j$. 
Then $\eta$ is smooth on $X\setminus\Sigma$, $\eta\leq0$ on $X$, 
and $\eta=0$ in a neighborhood of $\ov U_\alpha$. 
Since $\eta=\rho_j$ on $W_j$, $\eta$ is subharmonic 
in a neighborhood of $\Sigma$. We infer that 
\begin{equation}\label{e:eta}
dd^c\eta\geq-M\Omega \text{ on } X\setminus\Sigma,
\end{equation}
for some constant $M>0$, where $\Omega$ is as in Lemma \ref{L:Om}.  

Fix $r_0>0$ such that the disc $V:=\D_{2r_0}\Subset U_\alpha$ 
and we set $U:=\D_{r_0}$. We will show that there exist a constant 
$C>0$ and $p_0\in\mathbb{N}$ with the following property: 
if $p>p_0$, $z\in U$ and $\varphi_\alpha(z)>-\infty$, 
then there exists a section $S_{z,p}\in H^0_{c,(2)}(X,L^p)$ 
such that $S_{z,p}(z)\neq0$ and
\begin{equation}\label{e:lest}
\|S_{z,p}\|^2_p\leq C|S_{z,p}(z)|^2_{h_p}\,.
\end{equation}
In view of \eqref{e:Bkvar} this implies that 
\[P_{c,p}(z)\geq\frac{|S_{z,p}(z)|^2_{h_p}}{\|S_{z,p}\|^2_p}\geq C^{-1},\]
which shows that hypothesis $(ii)$ of Lemma \ref{L:Bkf} folds for $P_{c,p}$.

For the proof of \eqref{e:lest} we use techniques of Demailly \cite[Section 9]{D93b} (see also \cite[Section 5]{CM15}). By the Ohsawa-Takegoshi extension theorem \cite{OT87} there exists a constant $C'>0$ (depending only on $x$) such that for any $z\in U$ with $\varphi_\alpha(z)>-\infty$ and any $p$ there exists a function $v_{z,p}\in\cO_X(V)$ with $v_{z,p}(z)\neq0$ and
\[\int_V|v_{z,p}|^2e^{-2p\varphi_\alpha}\Omega\leq C'|v_{z,p}(z)|^2e^{-2p\varphi_\alpha(z)}.\]
We will extend  $v_{z,p}$ to a section $S_{z,p}\in H^0_{c,(2)}(X,L^p)$ by solving a $\db$-equation with $L^2$-estimates. 
Let $\chi:[0,+\infty)\to[0,1]$ be a smooth function such that $\chi=1$ on $[0,\tfrac{1}{2}]$ and $\chi=0$ on $[\tfrac{3}{4},+\infty)$, and set 
\[\theta_z(t)=\begin{cases}
\chi\big(\frac{|t-z|}{r_0}\big)\log\frac{|t-z|}{r_0}, & \text{for $t\in U_\alpha$},\\
0, & \text{for $t\in X\setminus U_\alpha$}.
\end{cases}\]

Let $\phi$ be the function constructed in Lemma \ref{L:phi} 
for the open set $X\setminus\ov U_\alpha\supset\Sigma$, 
so $\phi\leq0$ on $X$, $\phi=0$ in a neighborhood of 
$\ov U_\alpha$, and $dd^c\phi\geq b\Omega-A\omega$ on 
$X\setminus\Sigma$ for some constants $A,b>0$. 
We consider the metric 
\[\wi h_p=h_pe^{-2\delta p\phi-2\eta-2\theta_z}\,
\text{ on } L^p\,\vert_{X\setminus\Sigma}\,,\,
\text{ where } \delta=\frac{\varepsilon}{A}\,.\] 
Since $\theta_z$ is subharmonic in a neighborhood of $z$, 
it follows that there exists a constant $M'>0$ such that 
$dd^c\theta_z\geq-M'\Omega$ on $X\setminus\Sigma$, 
for all $z\in U$. Using Lemma \ref{L:phi}, \eqref{e:eta} 
and hypothesis (B), we obtain for all $p$ sufficiently large that 
\begin{align*}c_1(L^p,\wi h_p) &=pc_1(L,h)+\delta p\,
dd^c\phi+dd^c\eta+dd^c\theta_z \\
&\geq p\Big(c_1(L,h)+\frac{\varepsilon b}{A}\,
\Omega-\varepsilon\omega\Big)-(M+M')\Omega \\
&\geq\Big(\frac{p\varepsilon b}{A}-M-M'\Big)\Omega\geq 
2p\varepsilon'\,\Omega
\end{align*}
on $X\setminus\Sigma$, where $\varepsilon'=
\frac{\varepsilon b}{4A}$. Note that the Riemann surface 
$X\setminus\Sigma$ is Stein (see e.g.\ \cite[Corollary 26.8]{For81}), 
hence it carries a complete K\"ahler metric. By Lemma \ref{L:Om} we have 
$\ric_\Omega\geq-2\pi B\Omega$ on $X\setminus\Sigma$. Let
\[g\in L^2_{0,1}(X\setminus\Sigma,L^p,{\rm loc}),\;g=
\overline\partial\Big(v_{z,p}\chi\big(\tfrac{|t-z|}{r_0}\big)
e_\alpha^{\otimes p}\Big).\]
By \cite[Theorem 5.1]{D82} (see also \cite[Theorem 2.5]{CMM17}) 
it follows that there exists $p_0\in\N$ such that if $p>p_0$ 
there exists $u\in L^2_{0,0}(X\setminus\Sigma,L^p,{\rm loc})$ 
verifying $\db u =g$ and
\[\int_{X\setminus\Sigma}|u|^2_{h_p}e^{-2\delta p\phi-2\eta-2\theta_z}
\Omega\leq\frac{1}{p\varepsilon'}
\int_{X\setminus\Sigma}|g|^2_{h_p}e^{-2\delta p\phi-
2\eta-2\theta_z}\Omega.\]
Since $\phi=\eta=0$ on $U_\alpha$ we have that 
\begin{align*}
\int_{X\setminus\Sigma}|g|^2_{h_p}e^{-2\delta p\phi-
2\eta-2\theta_z}\Omega & =\int\limits_{\{\frac{r_0}{2}<
|t-z|<r_0\}}|v_{z,p}|^2|
\db\chi(\tfrac{|t-z|}{r_0})|^2
e^{-2p\varphi_\alpha-2\theta_z}\Omega \\ 
& \leq C''\int_V|v_{z,p}|^2e^{-2p\varphi_\alpha}
\Omega\leq C'C''|v_{z,p}(z)|^2e^{-2p\varphi_\alpha(z)}<+\infty,
\end{align*}
where $C''>0$ is a constant depending only on $x$. 
Using $\phi\leq0$, $\eta\leq0$, $\theta_z\leq0$, we get  
\[\int_{X\setminus\Sigma}|u|^2_{h_p}\Omega\leq
\int_{X\setminus\Sigma}|u|^2_{h_p}e^{-2\eta-2\theta_z}
\Omega\leq\frac{C'C''}{p\varepsilon'}\,|v_{z,p}(z)|^2
e^{-2p\varphi_\alpha(z)}.\]

Note that $u(z)=0$, as $e^{-2\theta_z(t)}=r_0^2|t-z|^{-2}$ 
is not integrable near $z$. Fix now $x_j\in\Sigma$. 
We have $\theta_z=0$ on $V_j$ and $\eta=\rho_j$ on $W_j$. 
Let $Y_\ell$ be a component of $V_j$ as in (P3) and 
$\sigma_\ell:\D\to Y_\ell$ be the normalization of $Y_\ell$ as in (P4). 
We may assume that $L$ has a holomorphic frame 
$e_L$ on $V_j$ and that the corresponding local weight of $h$ 
is upper bounded on $V_j$. Set $u=ve_L^{\otimes p}$. 
We infer that there exists a constant $c>0$ such that 
\[\int_{Y_\ell\setminus\{x_j\}}|u|^2_{h_p}e^{-2\eta-2\theta_z}\Omega 
\geq c^p\int_{(Y_\ell\cap W_j)\setminus\{x_j\}}|v|^2e^{-2\rho_j}\Omega
\geq ac^p\int_{(Y_\ell\cap W_j)\setminus\{x_j\}}|v|^2e^{-2\rho_j}\omega,\]
where the second inequality follows from Lemma \ref{L:Om}. 
Using \eqref{e:rho} and \eqref{e:ram2} we get  
\[\int_{(Y_\ell\cap W_j)\setminus\{x_j\}}|v|^2e^{-2\rho_j}
\omega\geq C_2^{-1}\int_{\D_r\setminus\{0\}}|v(\sigma_\ell(\zeta))|^2
|\zeta|^{-2}\,d\lambda,\]
for some $r\in(0,1)$. Therefore 
$\int_{\D_r\setminus\{0\}}|v(\sigma_\ell(\zeta))|^2|\zeta|^{-2}\,
d\lambda<+\infty$. Note that $\db u=g=0$ on $V_j\setminus\{x_j\}$, 
hence $v\circ\sigma_\ell$ is holomorphic on $\D\setminus\{0\}$. 
We infer from Lemma \ref{L:Ls} that $v\circ\sigma_\ell$ extends 
holomorphically at $0$ and $v\circ\sigma_\ell(0)=0$. 
This shows that $u$ is a continuous weakly holomorphic section of 
$L^p$ on $V_j$, for $j=1,\ldots,m$.

Set $S_{z,p}:=v_{z,p}\chi\big(\tfrac{|t-z|}{r_0}\big)e_\alpha^{\otimes p}-u$. 
Then $\db S_{z,p}=0$, $S_{z,p}=-u$ on each $V_j$, so 
$S_{z,p}\in H^0_c(X,L^p)$. Moreover, $S_{z,p}(z)=
v_{z,p}(z)e_\alpha^{\otimes p}(z)\neq0$, as $u(z)=0$. 
Using Lemma \ref{L:Om} we obtain
\begin{align*}
\|S_{z,p}\|^2_p  & \leq\frac{1}{a}\,
\int_{X\setminus\Sigma}|S_{z,p}|^2_{h_p}\Omega\leq
\frac{2}{a}\,\left(\int_V|v_{z,p}|^2e^{-2p\varphi_\alpha}\Omega+
\int_{X\setminus\Sigma}|u|^2_{h_p}\Omega\right) \\
& \leq\frac{2C'}{a}\left(1+\frac{C''}{p\varepsilon'}\right)
|v_{z,p}(z)|^2e^{-2p\varphi_\alpha(z)}\leq C|S_{z,p}(z)|^2_{h_p},
\end{align*}
with a constant $C>0$ depending only on $x$. 
This concludes the proof of \eqref{e:lest}, and hence of \eqref{e:creg}. 

\smallskip

We consider next the $L^1$-convergence near the singular points. 
Let $V_j$, $1\leq j\leq m$, verify properties (P1)-(P4), and fix 
$x_j\in\Sigma$. Let $Y_\ell$ be a component of $V_j$ as in (P3) 
and $\sigma_\ell$ be as in (P4). We may assume that $L$ has a 
holomorphic frame $e_L$ on $V_j$ and that the corresponding 
local weight $\varphi$ of $h$ is upper bounded on $V_j$. 
The proof of assertion $(i)$ is complete if we show that  
\begin{equation}\label{e:csing}
\frac{1}{p}\,\log P_{c,p}\to0,\;\frac{1}{p}\,
\log P_{w,p}\to0,\;\frac{1}{p}\,\log P_p\to0,\, \text{ in }L^1(Y_\ell,\omega).
\end{equation}

Let $Q_p$ denote either one of the Bergman kernels 
$P_{c,p},P_{w,p},P_p$, and let $S^p_1,\ldots,S^p_{n_p}$ 
be an orthonormal basis of the corresponding Bergman space. 
We write 
\[S^p_j=s^p_je_L^{\otimes p}, \text{ where } s^p_j\in
\cO_X(V_j\setminus\{x_j\}),\,\;u_p:=\frac{1}{2}\,
\log\left(\sum_{j=1}^{n_p}|s^p_j|^2\right).\]
Formula \eqref{e:BFS2} implies that 
\[\frac{1}{p}\,u_p\circ\sigma_\ell-\varphi\circ\sigma_\ell=
\frac{1}{2p}\,\log(Q_p\circ\sigma_\ell)\,
\text{ on } \D\setminus\{0\}.\]
By Proposition \ref{P:Bsreg} and \eqref{e:Bscomp}, 
we have that the functions $\zeta^{\alpha(y_\ell)}s^p_j(\zeta)$ 
extend holomorphically at $0\in\D$. Hence the function 
\[v_p(\zeta):=u_p\circ\sigma_\ell(\zeta)+\alpha(y_\ell)\log|\zeta|\]
extends to a subharmonic on $\D$. Moreover, $\varphi\circ\sigma_\ell$ 
also extends to a subharmonic on $\D$.

We infer from \eqref{e:creg} that 
$\frac{1}{p}\,\log(Q_p\circ\sigma_\ell)\to0$, hence 
$\frac{1}{p}\,v_p\to\varphi\circ\sigma_\ell$, in 
$L^1_{loc}(\D\setminus\{0\},\lambda)$. 
Combined with Lemma \ref{L:sh} this implies that the 
sequence of subharmonic functions $\{\frac{1}{p}\,v_p\}$ 
is locally uniformly upper bounded in $\D$. 
Therefore \cite[Theorem 3.2.12]{H94} yields that 
$\frac{1}{p}\,v_p\to\varphi\circ\sigma_\ell$, 
hence $\frac{1}{p}\,\log(Q_p\circ\sigma_\ell)\to0$, 
in $L^1_{loc}(\D,\lambda)$. Using \eqref{e:ram2} we get

\[\int_{Y_\ell\setminus{x_j}}|\log Q_p|\,\omega\leq 
C_2\int_\D|\log(Q_p\circ\sigma_\ell)|\,d\lambda.\]
Hence \eqref{e:csing} follows, and the proof of assertion $(i)$ is finished.

\medskip

$(ii)$ In the case of $\gamma_{c,p}$ and $\gamma_{w,p}$, $(ii)$ follows immediately from $(i)$ by using \eqref{e:BFS3}. From \eqref{e:BFS5} we see that $\frac{1}{p}\,\int_X\chi\,d\gamma_p\to\int_X\chi\,dc_1(L,h)$ for every smooth function $\chi$ on $X$. By Lemma \ref{L:FSneg} the measure $\gamma_p+\sigma_\star([R_\sigma])$ is positive, so we infer that  $\frac{1}{p}\,\gamma_p\to c_1(L,h)$ in the weak sense of measures. Therefore $\frac{1}{p}\,\gamma_p^+\to c_1(L,h)$, as $\frac{1}{p}\,\gamma_p^-\to0$ in the weak sense of measures, by Lemma \ref{L:FSneg}.
\end{proof}

\smallskip

\begin{proof}[Proof of Theorem \ref{T:zero}]
Let $Q_p,\eta_p$ be the Bergman kernel function and 
Fubini-Study measure of the space $H^p$. 
By Theorem 1.1 we have $\frac{1}{p}\,\log Q_p\to0$ in $L^1(X,\omega)$ 
and $\frac{1}{p}\,\eta_p\to c_1(L,h)$ in the weak sense of measures on $X$. 
Note that $c_1(L^p,h_p)=pc_1(L,h)$ and formulas 
\eqref{e:BFS3}, \eqref{e:BFS5}, which relate $\eta_p,Q_p,c_1(L,h)$, 
are valid. Moreover, we have the Lelong-Poincar\'e formula \eqref{e:LP} 
relating $[\di(s_p)],|s_p|_{h_p},c_1(L,h)$. 
Hence the proof of \cite[Theorem 1.1]{BCM20} 
goes through with no change in our setting, and we can take $A_p=p$ 
(see also \cite[Theorem 4.1]{BCM20} and its proof).

In our present situation $\mu_p$ is the normalized 
area measure on the unit sphere of $H^p$. 
By Proposition \ref{P:Bsreg} we have $n_p=\dim H^p\leq Cp$ 
for some constant $C>0$. Therefore Theorem \ref{T:zero} 
follows from \cite[Theorem 4.12]{BCM20}. 

We have to consider further the case 
$H^p=H^0_{(2)}(X\setminus\Sigma,L^p)$, 
when the measures $\eta_p=\gamma_p$ and $[\di(s_p)]$ 
are not necessarily positive. 
Arguing as in the proof of \cite[Theorem 1.1]{BCM20}, 
we have that $\displaystyle\frac{1}{p}\,\log|s_p|_{h_p}\to0$ 
in $L^1(X,\omega)$ for $\mu$-a.\,e.\ sequence $\{s_p\}\in\mathcal{H}$. 
By \eqref{e:LP}, this implies that  $\frac{1}{p}\,[\di(s_p)]\to c_1(L,h)$ 
in the sense of distributions. Since by \eqref{e:div3}, 
$[\di(s_p)]^-\leq\sigma_\star([R_\sigma])$, 
it follows that $\frac{1}{p}\,[\di(s_p)]\to c_1(L,h)$, 
$\frac{1}{p}\,[\di(s_p)]^+\to c_1(L,h)$, 
in the weak sense of measures on $X$. 
This completes the proof of assertion $(ii)$.

For $(i)$, we have as in the proof of \cite[Theorem 1.1]{BCM20} 
that $\E[\di(s_p)]$ is a well defined distribution on $X$ and 
\[\Big\langle\frac{1}{p}\,\E[\di(s_p)],
\chi\Big\rangle\to\int_X\chi\,dc_1(L,h)\]
for every smooth function $\chi$. By \eqref{e:div3} 
the measure $\nu:=[\di(s_p)]+\sigma_\star([R_\sigma])$ 
is positive, hence the total variation $|[\di(s_p)]|\leq\nu+
\sigma_\star([R_\sigma])$. Using \eqref{e:LP} we get 
\[\Big|\int_X\chi\,d[\di(s_p)]\Big|\leq\int_X|\chi|\,
d(\nu+\sigma_\star([R_\sigma])\leq
\|\chi\|_\infty\big(pc_1(L,h)+2\sigma_\star([R_\sigma])\big)(X).\]
We infer that $\E[\di(s_p)]$ is a well defined measure on $X$ 
and its total variation verifies 
\[\big|\E[\di(s_p)]\big|(X)\leq\big(pc_1(L,h)+
2\sigma_\star([R_\sigma])\big)(X).\]
This yields assertion $(i)$.
\end{proof}

\section{Examples}\label{S:ex}

In this section we exemplify our results
in the case of certain plane algebraic curves.
We also give a precise lower estimate
of the Bergman kernel $P_{w,p}$
in the case of a smooth Hermitian metric on $L$.

\begin{Example}
In this section we consider a class of algebraic curves $X$ 
in ${\mathbb P}^2$ which have one singular point 
and are normalized by ${\mathbb P}^1$. 
They are defined by graphs of polynomials and 
we will describe explicitly the spaces of holomorphic sections 
considered in the paper. We denote by $\C_n[\zeta]$ 
the space of polynomials of degree at most $n$ in $\C$.

Let $[z_0:z_1:z_2]$ denote the homogeneous coordinates on 
${\mathbb P}^2$, and consider the standard embedding 
$(z_1,z_2)\in\C^2\hookrightarrow[1:z_1:z_2]\in{\mathbb P}^2$. Let 
\[Q(z_0,z_1)=\sum_{j=0}^da_jz_0^jz_1^{d-j}, 
\text{ where } a_0\neq0,\]
be a homogeneous polynomial of degree $d\geq2$. 
Set $P(z_1)=Q(1,z_1)$, so $P\in\C[z_1]$ is a polynomial of degree $d$. 
Let $X=X_Q$ be the algebraic curve of degree $d$ in ${\mathbb P}^2$ 
defined by 
\[X=\big\{[z_0:z_1:z_2]\in{\mathbb P}^2:\,z_0^{d-1}z_2-Q(z_0,z_1)=0\big\}.\]
Furthermore, let 
\[\omega=\omega_\FS\,\vert_X\,,\,\;L=\cO_{{\mathbb P}^2}(1)\,\vert_X,\]
where $\omega_\FS$ is the Fubini-Study form and 
$\cO_{{\mathbb P}^2}(1)$ is the hyperplane bundle on ${\mathbb P}^2$. 
Recall that $\pi^\star\omega_\FS=dd^c\log\|Z\|$, 
where $Z=(z_0,z_1,z_2)\in\C^3\setminus\{0\}$ and 
$\pi:\C^3\setminus\{0\}\to{\mathbb P}^2$ is the canonical projection. 
Moreover, if $U_j=\{z_j\neq0\}\subset{\mathbb P}^2$ 
then the transition functions of $\cO_{{\mathbb P}^2}(1)$ 
are given by $g_{jk}([z_0:z_1:z_2])=z_k/z_j$ on 
$U_j\cap U_k$, $0\leq j,k\leq2$ (see e.g.\ \cite[Example 4.4]{BCM20}). 

Note that in $\C^2\cong U_0$ we have 
$X\cap U_0=\{(z_1,z_2):\,z_2=P(z_1)\}$, 
so $X\cap U_0$ is biholomorphic to $\C$ via the obvious map 
$\zeta\in\C\to(\zeta,P(\zeta))\in X\cap U_0$. 
We also note that $X$ has one point on the line at infinity, 
as $X\cap\{z_0=0\}=\{[0:0:1]\}$. If $d\geq3$ then $X$ 
is singular and locally irreducible at $x_1:=[0:0:1]$, 
so $\Sigma=\{x_1\}$. It follows that the normalization of $X$ is 
${\mathbb P}^1$. In fact we have the explicit formula for this:
\[\sigma:{\mathbb P}^1\to{\mathbb P}^2\,,\,\;
\sigma([t_0:t_1])=[t_0^d:t_0^{d-1}t_1:Q(t_0,t_1)].\]
Here $[t_0:t_1]$ denote the homogeneous coordinates on 
${\mathbb P}^1$, and we and consider the standard 
embedding $\zeta\in\C\hookrightarrow[1:\zeta]\in{\mathbb P}^1$. 
Note that $\sigma([0:1])=x_1$ and for $r>0$ sufficiently small, the function 
\[f(t)=\sigma[t:1]=\left[\frac{t^d}{Q(t,1)}:
\frac{t^{d-1}}{Q(t,1)}:1\right],\;t\in\D_r,\]
is the local normalization of $X$ at $x_j$ as in \eqref{e:ln}. 
We infer that the ramification divisor of $\sigma$ (see \eqref{e:ramdiv}) is 
\[R_\sigma=(d-2)y, \text{ where } y=[0:1].\]

Since $X$ is locally irreducible at $x_1$ we have that 
$\cO_{X,c}(U)=\cO_{X,w}(U)$, and, by Lemma \ref{L:wsh3}, 
that $SH(U)=WSH(U)$, for any open set $U\subset X$. 
Let $S\in H^0_w(X,L^p)$ be represented by the holomorphic functions 
$s_0$ on $X\cap U_0$ and $s_2$ on $X\cap U_2$. 
Then there exists an entire function $s$ such that 
$s_0([1:\zeta:P(\zeta)])=s(\zeta)$ for $\zeta\in\C$, and we have 
\[s_0([1:\zeta:P(\zeta)])=P^p(\zeta)s_2([1:\zeta:P(\zeta)])\]
for all $\zeta$ with $|\zeta|$ sufficiently large. 
We infer that $s/P^p$ is bounded near $\infty$, 
hence $s$ is a polynomial of degree $\leq dp$. 
We conclude that the space $H^0_w(X,L^p)$ is isomorphic to 
$\C_{dp}[\zeta]$, hence $\dim H^0_w(X,L^p)=dp+1$. 
We can compare this to the subspace of restrictions to $X$ 
of global holomorphic sections on ${\mathbb P}^2$,
\[V_p:=\big\{S\,\vert_X:\,S\in H^0({\mathbb P}^2,L^p)\big\}.\]
If $H^0_X({\mathbb P}^2,L^p)=
\{S\in H^0({\mathbb P}^2,L^p):\,S=0 
\text{ on }X\}$ then $V_p\cong 
H^0({\mathbb P}^2,L^p)/H^0_X({\mathbb P}^2,L^p)$. 
Recall that sections in $H^0({\mathbb P}^2,L^p)$ 
are given by homogeneous polynomials of degree $p$ in 
$z_0,z_1,z_2$ so $\dim H^0({\mathbb P}^2,L^p)=\frac{(p+1)(p+2)}{2}$. 
Therefore 
\[\dim V_p=\frac{(p+1)(p+2)}{2}-
\frac{(p-d+1)(p-d+2)}{2}=dp-\frac{d(d-3)}{2}<dp+1=
\dim V_p,\]
since $d\geq3$. It is worth observing that 
$\sigma^\star:H^0_w(X,L^p)\to H^0({\mathbb P}^1,\sigma^\star L^p)$ 
is an isomorphism and $\sigma^\star L\cong\cO_{{\mathbb P}^1}(d)$.

We next describe the set of singular Hermitian metrics $h$ on $L$ 
that have subharmonic weights. Arguing as above, 
we infer that the weight of such $h$ on $X\cap U_0$ 
is given by a subharmonic function $\varphi$ on $\C$ 
such that $\varphi(\zeta)-\log|P(\zeta)|$ is bounded at infinity. Hence 
\begin{equation}\label{e:Lc}
\varphi(\zeta)\leq d\log^+|\zeta|+C_\varphi,\;\,\forall\,\zeta\in\C,
\end{equation}
with some constant $C_\varphi$. This shows that the set of 
singular Hermitian metrics on $L$ is in one-to-one correspondence 
to the class $d\mathcal L(\C)$, where $\mathcal L(\C)$ 
is the Lelong class of subharmonic functions of logarithmic growth on $\C$. 

We conclude this section by describing the Bergman space 
$H^0_{(2)}(X\setminus\Sigma,L^p)$ defined in \eqref{e:Bsreg}, 
where $h$ is the metric given by a function $\varphi\in d\mathcal L(\C)$. 
In view of the above, this space consists of the entire functions 
$s$ on $\C$ which verify 
\[\int_\C|s|^2e^{-2p\varphi}\,\sigma^\star(\omega)<+\infty.\]
We have 
\[\sigma^\star(\omega)=\frac{1}{2}\,
dd^c\log\big(1+|\zeta|^2+|P(\zeta)|^2\big)=
\frac{1+|P'(\zeta)|^2+|\zeta P'(\zeta)-
P(\zeta)|^2}{\pi\big(1+|\zeta|^2+
|P(\zeta)|^2\big)^2}\,\frac{i}{2}\,d\zeta\wedge d\ov\zeta.\]
Since $\zeta P'-P$ has degree $d$ we infer that 
\[\frac{c_1}{1+|\zeta|^{2d}}\,\frac{i}{2}\,d\zeta\wedge 
d\ov\zeta\leq\sigma^\star(\omega)\leq
\frac{c_2}{1+|\zeta|^{2d}}\,
\frac{i}{2}\,d\zeta\wedge d\ov\zeta\]
holds for $\zeta\in\C$, with some constants $c_1,c_2>0$. 
Using \eqref{e:Lc} we obtain that 
\[\int_{\C\setminus\D}|s(\zeta)|^2|\zeta|^{-2d(p+1)}\,d\lambda
<+\infty,\]
which implies that $s$ is a polynomial of degree 
$\leq dp+d-2$. In conclusion, $H^0_{(2)}(X\setminus\Sigma,L^p)$ 
is isomorphic to the space of polynomials $s\in\C_{dp+d-2}[\zeta]$ 
that verify 
\[\int_\C\frac{|s|^2e^{-2p\varphi}}{1+|\zeta|^{2d}}\,d\lambda
<+\infty,\]
where $\varphi\in d\mathcal L(\C)$ is the weight of $h$. 
We refer to the survey \cite{BCHM18} for results about 
equidistribution of zeros of random polynomials.
\end{Example}

\begin{Example}
Let $X$ be an irreducible algebraic curve and 
$(L,h)\to X$ be a Hermitian holomorphic line bundle
as in (A) and (B). We assume further that 
the Hermitian metric $h$ is smooth. 
Since $h$ is smooth, $H^0_{w,(2)}(X,L^p)=H^0_w(X,L^p)$
and $P_{w,p}(x)$ is defined for all $x\in X\setminus\Sigma$.
Let
$\sigma:\wi X\to X$ be the normalization of $X$,
$(\sigma^\ast L,\sigma^\ast h)\to\wi X$ the pull-back of $(L,h)$.
The curvature $c_1(\sigma^\ast L,\sigma^\ast h)$ is
semi-positive on $\wi X$, is positive on 
$\wi X\setminus\sigma^{-1}(\Sigma)$,
and vanishes up to finite order at any point of
$\sigma^{-1}(\Sigma)$. 
These are precisely the hypotheses of the results from
\cite{MS19} under which the Bergman kernel
asymptotics hold for a semi-positive line bundle
on a Riemann surface. 
There exists $C>0$ such that $\sigma^\ast\omega\leq C\wi{\omega}$
hence for any $S\in H^0_w(X,L^p)$ we have 
\[\|S\|^2_p=\int_{X\setminus\Sigma}|S|^2_{h_p}\,\omega
\leq C\int_{\wi{X}\setminus\sigma^{-1}
(\Sigma)}|\sigma^\ast S|^2_{\sigma^\ast h_p}\,\wi{\omega}=
C\int_{\wi{X}}|\sigma^\ast S|^2_{\sigma^\ast h_p}\,\wi{\omega}.\]
We consider the Bergman kernel function $\wi{P}_p$ of 
the space $H^0(\wi{X},\sigma^\ast L^p)$ with respect to the
Hermitian metric $\sigma^\ast h_p$ and volume form $\wi{\omega}$. 
By the isomorphism \eqref{e:iso} and the variational principle for the
Bergman kernel functions we have
\begin{equation}\label{e:com}
P_{w,p}(x)\geq\frac1C \wi{P}_p(\sigma^{-1}(x)),
\quad\text{for any $x\in X\setminus\Sigma$}\,.
\end{equation}
By \cite[Lemma 25]{MS19} there exists $\wi{C}>0$
such that for $p$ large enough we have 
\begin{equation}\label{e:lb0}
\wi{P}_p(\wi{x})\geq\wi{C}p^{2/r},
\quad\text{for any $\wi{x}\in\wi{X}$},
\end{equation}
where $r$ is the maximal normalized vanishing
order of the curvature $c_1(\sigma^\ast L,\sigma^\ast h)$
on $\wi{X}$, namely,
$r=\max\{r_{\wi{x}}:\wi{x}\in\wi{X}\}$,
$r_{\wi{x}}=\ord c_1(\sigma^\ast L,\sigma^\ast h)_{\wi{x}}+2$.
Hence, \eqref{e:com} and \eqref{e:lb0} show that 
there exists $C'>0$ such that for $p$ large enough we have
\begin{equation}\label{e:lb1}
P_{w,p}(x)\geq C'p^{2/r},
\quad\text{for any $x\in X\setminus\Sigma$}.
\end{equation}
Thus for $h$ smooth the convergence
$\frac1p\log P_{w,p}\to0$, as $p\to\infty$ in $L^1(X,\omega)$
follows directly from Lemma \ref{L:Bkf} and its proof. 
\end{Example}

\end{document}